\numberwithin{equation}{section}
\theoremstyle{definition} \newtheorem{deff}{Definition} [section]
\theoremstyle{definition} 
\theoremstyle{definition} \newtheorem{thm} [deff]{Theorem}
\theoremstyle{definition} \newtheorem{lm} [deff]{Lemma}
\theoremstyle{remark} 
\theoremstyle{remark} \newtheorem{rmk}[deff]{Remark} 
\theoremstyle{plain}  
\newtheoremstyle{claim}
  {}
  {}
  {\itshape}
  {}
  {}
  {.}
  { }
  {}
\theoremstyle{claim}
\newcommand{\R}{\mathbb{R}}
\newcommand{\restr}[1]{\lower3pt\hbox{$|_{#1}$}}
\renewcommand{\d}{{\rm d}}
\newcommand{\nchi}{{\raise.3ex\hbox{$\chi$}}}
\newcommand{\F}{{\mathcal F}}
\newcommand{\fr}{\penalty-20\null\hfill$\blacksquare$}         
\begin{document}

\begin{frontmatter}

\title{Elastic bodies with kinematic constraints on many small regions}

\author[aff1]{Andrea Braides\texorpdfstring{\corref{cor1}}{}}
\ead{abraides@sissa.it}

\author[aff1]{Giovanni Noselli}

\author[aff1]{Simone Vincini}

\address[aff1]{SISSA\,--\,International School for Advanced Studies, Mathematics Area, via Bonomea 265, 34136 Trieste, Italy}

\begin{abstract}
	We study the equilibrium of hyperelastic solids subjected to kinematic constraints on many small regions, which we call perforations.
    Such constraints on the displacement $u$ are given in the quite general form $u(x) \in F_x$, where $F_x$ is a closed set, and thus comprise confinement conditions, unilateral constraints, controlled displacement conditions, etc., both in the bulk and on the boundary of the body. The regions in which such conditions are active are assumed to be so small that they do not produce an overall rigid constraint, but still large enough so as to produce a non-trivial effect on the behaviour of the body. Mathematically, this is translated in an asymptotic analysis by introducing two small parameters: $\varepsilon$, describing the distance between the elements of the perforation, and $\delta$, the size of the element of the perforation.  We find the critical scale at which the effect of the perforation is non-trivial and express it in terms of a $\Gamma$-limit in which the constraints are relaxed so that, in their place, a penalization term appears in the form of an integral of a function $\varphi(x,u)$. This function is determined by a blow-up procedure close to the perforation and depends on the shape of the perforation, the constraint $F_x$, and the asymptotic behaviour at infinity of the strain energy density $\sigma$. We give a concise  proof of the mathematical result and numerical studies for some simple yet meaningful geometries.
\end{abstract}

\begin{keyword}
    perforated media \sep $\Gamma$-convergence \sep hyperelastic solids, kinematic constraints.
\end{keyword}


\end{frontmatter}
\bigskip
\hfill{\em Dedicated to Kaushik Bhattacharya on the occasion of his sixtieth birthday}


%
\section{Introduction}

The title of this work echoes the one of a paper by the first author and the dedicatee of this work and this Special Issue \cite{BK}. In that paper, the behaviour of an elastic thin film with many small cracks was analyzed, showing a limit behaviour different from the one usually obtained in dimension-reduction theories, with an additional contribution due to the overall deformation of the cracks. Similarly, here we consider a problem in which an increasingly complex geometry of the sets in which kinematic constrains are imposed leads to a non-trivial additional energy contribution. We have in mind a classical problem in Mechanics, which consists in determining the deformation of an elastic body under prescribed displacements at its boundary. Weak solutions to this problem can be achieved via minimization of the strain-energy functional among the admissible displacement fields~\cite{gurtin_1981_a, gurtin_1981_b}.
This setting can be enriched upon introducing kinematic constraints on either periodic or locally periodic point subsets of the reference configuration, which, in line with the existing mathematical literature, are referred to as \emph{perforations}. 

In order to start with a naive example, consider the action of fixing a sheet of wallpaper to a wooden ceiling with a set of drawing pins. If the pins are few, the sheet can still deform away from them but senses their presence in their vicinity. If the material is rigid and we use many pins, the sheet will adhere to the ceiling independently of the size of the pins. If instead we use a sheet made of a less rigid material (think of a latex sheet) and an array of very small pins sufficiently sparsely distributed, their effect will be sensed by the material only close to the pins, and the sheet will be able to detach from the ceiling, kept attached only by small filaments. The overall effect of the pins will be a penalization of the averaged distance from the zero-displacement state, even though at the pin sites a zero-displacement (Dirichlet) condition is always satisfied. In the case of extremely small pins, the filaments, even though still present, carry a negligible amount of energy and the sheet may freely detach. In a variational setting, problems of this type have been widely considered, in terms of the asymptotic description of the behaviour of perforated domains subjected to Dirichlet boundary conditions, see \cite{March-Krus,Cio-Mur}.

\begin{figure}[t]
    \centering
\includegraphics[width=1.0\linewidth]{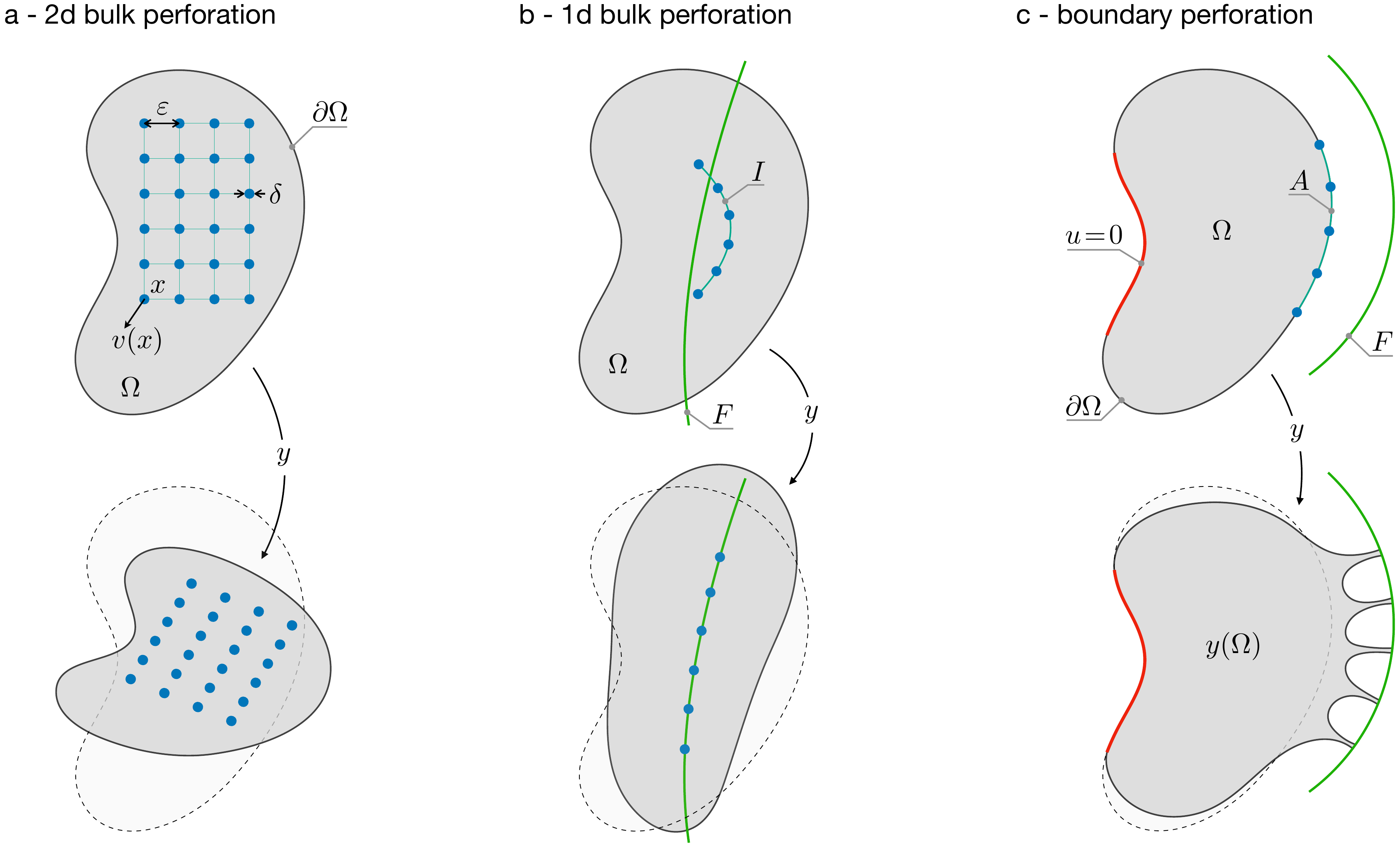}
    \caption{Examples of perforations on a reference set $\Omega \subset \R^2$ and relative constraints. (a) A bulk perforation in the reference set is subjected to the constraint $F_x = \{v(x)\}$, such that the function $v:\mathcal{P} \to \R^2$ imposes the deformation on the perforation. In particular, the figure reports the case of an affine deformation. (b) A lower dimensional perforation on the subset $I \subset \Omega$ is constrained to lay on the target set $F$ (green curve). (c) A boundary perforation on $A \subset \partial\Omega$ is forced on the target set $F$ (green curve). For all cases, the  reference (deformed) configuration is reported on the first (second) row of the figure.}
    \label{fig:intro}
\end{figure}

Keeping our naive example in mind, we now want to extend our analysis either to pins that, for example, allow the wallpaper to move in some directions but not others (unilateral constraints) or loose pins that allow a certain detachment from the pinning site (confinement constraint).
To this end, we extend the class of amenable constraints by requiring the deformation to belong to a general class of closed sets parameterized by the sites of the perforation. More explicitly, denoting by  $y\colon\Omega\to\mathbb R^m$ the deformation of the reference set $\Omega \subset \mathbb R^d$, 
we study the behaviour of hyperelastic energies subjected to vector constraints of the form $y(x)\in F_x$, where we identify by (the material point) $x$ the sites of the perforation $\mathcal P$ in the reference configuration and $F_x\subset\mathbb{R}^m$ is a closed set.
A first example we have in mind is that of $F_x= \{\varv(x)\}$, in which the function $\varv\colon\mathcal{P}\to\mathbb R^m$ prescribes the deformation at the perforation, see Fig.~\ref{fig:intro}a. Another relevant example comes from $F_x=F$, where $F$ is a fixed closed set on which the perforation is constrained, such as ${\mathbb S}^{d-1}$. In particular, such constraints may be imposed on a lower-dimensional set $I \subset \Omega$, see Fig.~\ref{fig:intro}b.
Furthermore, the perforation may lay on a subset $A$ of the boundary $\partial\Omega$ and act in this case as a constraint of \lq loose confinement' for the hyperelastic body. As a prototype, we  may consider the case of $F_x=F$ a target set, as shown in see Fig.~\ref{fig:intro}c, or that of $F_x$ a cone with axis singled out by the normal to $\partial\Omega$. Clearly, these figures are only cartoons to give an idea of a much broader mathematical setting. In Fig.~\ref{fig:intro}c we have the possibility to picture the deformation essentially as a graph, highlighting the pinning effect of the perforation, which is not easy to picture in a cartoon, but always present, for a perforation in the bulk.

For the class of vector constraints introduced above, we aim at describing the averaged behaviour of the minimizers of hyperelastic energies under suitable assumptions on the strain-energy density $\sigma:\R^{m\times d}\to\R$ and on the closed sets $F_x$, as later detailed. For this purpose, we assume that the period of the perforation is characterized by a small parameter $\varepsilon$. Moreover, since constraints at points are in general meaningless, we impose them on small sets, whose size is controlled by introducing a second small parameter $\delta$.
The overall behaviour of the hyperelastic energy can then be analyzed by first computing the critical scale of $\delta$ for which it is not trivial, and then describing the asymptotic behaviour of minimum problems at this scaling. It is by now common in this setting to express this limit behaviour, using the terminology of $\Gamma$-convergence, by computing the $\Gamma$-limit at this critical scale, which is just a compact way to express the convergence of minimum values and minimizers  (see \cite{GCB,Bha-Ja}).

We explore different mechanical scenarios and subdivide our results in the relevant mathematical statements. However, we anticipate a remarkable feature of the limit functional that is common to all the treated cases. Besides the classical strain-energy term, the $\Gamma$-limit governing the averaged behaviour of the hyperelastic energy is shown to comprise an integral term encoding the effect of the relaxed constraints on the perforation.

The mathematical setting we introduce significantly extends previous results on hyperelastic bodies subjected to constraints on perforations. While the goal of our study is to develop such mathematical setting, we foresee the applicability of our results to a wide range of mechanical contexts. For instance, they may be exploited to study the equilibrium configuration of membranes and tensile structures supported by perforations located at the boundary, in the interior, or both. This is a context that we explore through numerical simulations on simple test cases for illustrative purposes. Another field of application may be that concerning the shape morphing of elastic bodies from an internal or external control~\cite{ortigosa_2021, andrini_2022}, encoded by the specific constraints imposed on the perforation or that regarding the clothing of surfaces by elastic, micro-architected membranes~\cite{mina_2018, andrini_2024}.

The manuscript is organized as follows. Section 2 is devoted to the presentation of the mathematical setting, the working assumptions, and to the mathematical results, which we subdivide in different statements. Section 3 illustrates the applicability of the obtained results through the numerical solution of prototypical examples, whereas Section 4 closes the manuscript by reporting about future perspectives.

\section{Mathematical setting of the problem and main results}\label{sec:mat_results}
In this section we gather the analytical results of the paper. The content is organized so that a reader not wishing to concentrate on the mathematical statements may directly skip theorems and proofs, as their mechanical significance is described in the rest of the text. 
\smallskip

We start by fixing some miscellaneous notation used in the text. The strictly positive natural numbers $d$ and $m$ will be the dimension of the reference and target spaces, respectively. They can be arbitrary, even if we will have the physical situations in mind.
The space of real-valued $m\times d$ matrices is denoted by $\mathbb R^{m\times d}$. The euclidean norm of a point $x\in\mathbb R^k$ is denoted by $|x|$, of a matrix $\xi$ by $\|\xi\|$. A ball of center $x$ and radius $r$ is denoted by $B(x,r)$. The unit spherical surface in $\mathbb R^d$ is denoted by $\mathbb S^{d-1}$. The characteristic function of a set $A\subset \mathbb R^d$ is denoted by 
$\mathbbm{1}_A$. The symbol $\|\phi\|_{L^q}$ denotes the norm in the corresponding $L^q$ space.

\smallskip
We consider general hyperelastic energies 
defined on functions $u$ subjected to vector constraints on either periodic or locally periodic point subsets, which we call `perforations' by coherence with the existing mathematical literature. In our context it would be more correct to call such point sets `pinning sites' or `constraint sites'.
Since we are interested in an averaged behaviour of minimizers of such energies, we suppose that the period of the perforation is parameterized by a small parameter $\varepsilon$. Moreover, since point values of the function $u$ are in general meaningless, such constraints must indeed be imposed on small sets, whose size is parameterized by introducing a second small parameter $\delta$. In analytical terms we will consider, given $\Omega$ an open set in $\mathbb R^d$ acting as a reference configuration for $u$, a fixed hyperelastic energy
$$
\int_\Omega \sigma(\nabla u)\d x,
$$
 with domain, depending on $\varepsilon$ and $\delta$, the set of functions satisfying a condition $u(x)\in F_x$ for $x\in K_{\varepsilon,\delta}$, where $K_{\varepsilon,\delta}$ is a $\varepsilon$-periodic array of sets of size $\delta$ (specified further on) and $F_x$ is an $x$-dependent closed set.

The overall behaviour of minimum problems involving such energies can then be analyzed by first computing the critical scaling $\delta=\delta(\varepsilon)$ for which this behaviour is not trivial as $\varepsilon$ and $\delta$ tend to $0$, and then, using the terminology of $\Gamma$-convergence, by computing the $\Gamma$-limit at this critical scale. We recall that $\Gamma$-convergence is simply a compact way to express the fact that minimum problems depending on small parameters converge (in the sense that minimum values and minimizers convergence) to the corresponding problem  for the $\Gamma$-limit. Moreover, by the topological nature of its definition, $\Gamma$-convergence is stable by addition of continuous perturbations, so that in our case it automatically implies the convergence also of problems with forcing terms. Finally, for our functionals $\Gamma$-convergence is also compatible with the addition of boundary conditions. These two properties are very handful since they imply that the computation of one single $\Gamma$-limit allows the description of a whole family of problems with arbitrary forcing terms and boundary conditions (see \cite{GCB}). 

In the case when ($u$ is a scalar and) the constraint on the perforations is simply $u=0$, that is, $F_x$ is the set $\{0\}$ independently of $x$, the just described limit problem is classical and regards the behaviour of perforated domains subjected to Dirichlet boundary conditions on a diffuse set (of vanishing measure). The prototypical case is when $\sigma(\xi)=\|\xi\|^2$ in dimension $d=3$, for which $\delta=\varepsilon^3$ and the limit is $$\int_\Omega\|\nabla u\|^2\d x+C \int_\Omega |u|^2\d x,$$
with $C$ an explicit constant depending on the details of the perforation
(see \cite{March-Krus,Cio-Mur}; for an approach in the non-convex vector case we refer to \cite{AnBr02}). The additional term describes the limit effect of the diffuse Dirichlet condition. The scale $\delta=\varepsilon^3$ is critical in the sense that if $\delta<\!<\varepsilon^3$ then the effect of the perforation vanishes as $\varepsilon\to 0$ and in the limit we only obtain the first integral (that is, $C=0$ in the expression above), while if $\delta>\!>\varepsilon^3$ then the effect of the perforation becomes very strong as $\varepsilon \to 0$, and the only function with finite limit energy is $u=0$ (that is, in a sense $C=+\infty$  in the expression above).

For energies describing the behaviour of elastic bodies, more general constraints than a simple fixed Dirichlet condition can be of interest.
In this paper we study energies defined on vector functions $u$ and subjected to general constraints $u\in F_x$. In this notation, $x$ is a parameter for the sites of the perforation, and $F_x\subset \mathbb R^m$ is a closed set. As a model case we have in mind either that $F_x= \{\varv(x)\}$, where $\varv$ is a function describing the deformation of the perforation, or $F_x=F$, a fixed closed set such as $\mathbb S^{d-1}$,
which imposes some constraint on the displacement $u(x)$ at the perforation. Furthermore, we may also consider perforation lying at the boundary of $\Omega$, which act as a `loose confinement' constraint. In this case, we may take as prototypes $F_x=F$, where $F$ is a target set, or $F_x$ a cone with axis the direction of the inner normal to $\Omega$. 
We now describe more analytically our assumptions. 
\bigskip

 \noindent
 {\bf The energy density.} We assume that $\sigma\colon\R^{m\times d}\to\R$ is a general hyperelastic energy density satisfying the following assumptions:

 \smallskip
  {1.} $\sigma$ is {\em quasiconvex};
 
   {2.}   there exists $p\in (0,d)$ such that  $\sigma$ has {\em $p$-growth}; that is, there exist $c_1, c_2>0$ such that 
        \begin{equation}
            c_1 \|\xi\|^p-\frac{1}{c_1}\leq \sigma(\xi)\leq c_2\left(1+\|\xi\|^p\right) \quad \hbox{ for all $\xi\in \R^{m\times d}$;} 
        \end{equation}

          {3.} there exists the {\em recession function of $\sigma$}; that is, there exists  $\sigma_0\colon\R^{m\times d}\to \R$ \footnote{Compatible with the $\sigma_\varepsilon$ notation used in the following.} and the limit
        \begin{equation}\label{defsigma0}\lim_{t\to\infty}\frac{\sigma(t\xi)}{t^p}=\sigma_0(\xi) \quad \hbox{ for all $\xi\in \R^{m\times d}$.}
        \end{equation}

We will assume a quantitative version of Condition 3; namely, that there exists $\alpha>\frac{p}{d}$ and a constant $C>0$ such that
\begin{equation}\label{quantitative}
\bigg|\frac{\sigma(t\xi)}{t^p}-\sigma_0(\xi)\bigg|\leq \frac{C}{t^\alpha}(1+\|\xi\|^p).
\end{equation}
This is a mild technical condition satisfied by all reasonable energy densities admitting a recession function.

\smallskip
We note that the assumption that $\sigma$ is quasiconvex can be dropped, up to a preliminary relaxation argument, in which the integrands are replaced by their quasiconvex envelopes (see \cite{Bhattacharya-book,BrDe98}).  Note moreover that the function $\sigma_0$ is positively homogeneous of degree $p$ by definition; that is, $\sigma_0(t\xi)=t^p\sigma_0(\xi)$ if $t>0$. The existence of such a $\sigma_0$ holds for sequences of diverging $t$; the assumption that is independent of the subsequence is a uniformity assumption on the $p$-homogeneous behaviour for large values of $|\xi|$. In particular $\sigma_0=\sigma$ if $\sigma$ is positively homogeneous of degree $p$.
\bigskip

\noindent
 {\bf Perforations.} We consider a reference {\em perforation shape $K$}, which we assume to be a closed set. In this paper $K$ is either a regular closed set (that is, it is the closure of its interior), or a regular closed subset of a sufficiently smooth hypersurface. The {\em perforation set}, or simply {\em perforation}, will be obtained as a union of scaled copied of $K$, or small deformations of scaled copied of $K$, which will be specified in the different results.

\bigskip
\noindent
 {\bf Constraints.} We introduce an $x$-parameterized family of {\em constraint sets $F_x$}. The constraints on a function $u$ at scale $\varepsilon$ will be expressed in terms of an inclusion $u(z)\in F_{x_\varepsilon(z)}$ for $z$ in the perforation.  The value $x_\varepsilon(z)$ will be constant on each element of the perforation, so that this condition expresses the same constraint for the displacement on each element of the perforation. 
 Even though at fixed $\varepsilon$ only a discrete family of sets $F_x$ is taken into account, we find it convenient to define $F_x$ for all $x$ rather than introducing $\varepsilon$-dependent constraints for an easier description of the asymptotic behaviour.
 
 In order to have a limit behaviour as $\varepsilon$ tends to $0$ we make some assumptions on the {\em constraint set $F_x$}. We assume that the set function $x\mapsto F_x\in \mathcal P(\R^m)$ is such that the following conditions hold:

\begin{enumerate}
  \item the set $F_x$ is closed for all $x\in \overline{\Omega}$;
  \item there exist $M>0$ and a Lipschitz function $s\colon\overline{\Omega} \to \R^m$ such that $s(x)\in F_x\cap B(0, M)$ for all $x\in \overline{\Omega}$. Furthermore, there exists $C$ such that for all $j>2M$ there exists a $C$-Lipschitz function $\psi_j$ such that $\psi_j(x, u)=u$ for $|u|\leq j$, $\psi_j(x, u)=s(x)$ for $|u|\geq 2j$, $\psi_j(x, u)\in F_x$ for every $w\in F_x$, with $s$ as above;
  \item there exist a family of bi-Lipschitz diffeomorphisms $(\Phi_{x_1, x_2})_{x_1, x_2 }$ such that  $\Phi_{x_1, x_2}(F_{x_1})=F_{x_2}$,  $\Phi_{x, x}=id$, and:

   \hspace{15pt} i) for all $L>0$ there exists $C_L>0$ such that $|\Phi_{x_1, x_2}(u)-u|\leq C_L|x_1-x_2|$ for $|u|\leq L$;
   
    \hspace{15pt} ii) there exists $C>0$ such that $\|\nabla \Phi_{x_1, x_2}-I\|_{L^\infty}\leq C|x_1-x_2|$.
\end{enumerate}

Condition 1 is necessary in order that the constraint $u\in F_x$ on the perforation be closed. Condition~2 implies some kind of connectedness at infinity for $F_x$ uniformly in $x$, and is trivially satisfied if $F_x$ are contained in a common compact set. Finally Condition 3 is a regularity assumption on $x\mapsto F_x$, and is trivially satisfied if $F_x$ is independent of $x$.

\bigskip
\noindent{\bf Functional setting.} By the highly inhomogeneous nature of our problems, we will use topologies that allow for increasingly oscillating functions. Our functional setting will be then the vector Sobolev space $W^{1,p}(\Omega;\mathbb R^m)$ equipped with its weak topology. Since all our energies will be equi-coercive, we can equivalently  equip this space with the strong convergence in $L^p(\Omega;\mathbb R^m)$. Note that embedding theorems give that functions in $W^{1,p}(\Omega;\mathbb R^m)$ are continuous if $p>d$, which explains why constraints give a trivial effect in that case. For example, a homogeneous Dirichlet condition will be only satisfied in the limit by the function identically $0$. The {\em critical case} $p=d$ can also be treated but at the expense of a heavier notation and longer proofs (see the observations at the end of this section).

\bigskip
We subdivide our result in different cases depending on the geometry and location of the perforation. In the first one, we consider the case of periodically distributed perforations in the interior of $\Omega$, giving a limit bulk contribution. By modifying this case we will then obtain different statements for locally periodic perforations and for perforations located on the boundary of $\Omega$.

\smallskip
We first consider functions $u\colon\Omega\to\mathbb R^m$, with constraints given on a perforation composed of an $\varepsilon$-periodic array of closed sets.
As a first prototypical case, we can think of $d=2$, $m=3$ and $F_x$, independent of $x$, a cone, describing a membrane subjected to a unilateral constraint. We can take for example $F_x=\{z\in\mathbb R^3, z_3\ge 0\}$, in which displacements below the plane $\{z_3=0\}$ are forbidden in the perforation, or $F_x=\{w(x)\}+F$, where we penalize functions such that $u(x)-w(x)$ does not lay in the set $F$.
As a second example we may take $m=3$ and $d=2$ or $3$ with constraints $u(x)=w(x)$ on $x$ in a regular $d$-dimensional lattice of $\mathbb R^d$. The function $w$ may be thought of as a control variable, modeling a control device internal to the reference set $\Omega$.
The constraint is relaxed to a bulk integral of the form
$$
\int_\Omega \varphi(x,u(x))\,\d x;
$$
that is, for $\varepsilon$ and $\delta$ small the functions $u$ minimizing the energy with such constraints, subjected to given boundary and/or forcing conditions, will be close to minimizers of
$$
\int_\Omega \sigma(\nabla u)\d x+\int_\Omega \varphi(x,u(x))\,\d x,
$$
subjected to the same boundary and/or forcing conditions.
The function $\varphi(x,z)$ is characterized by a minimum problem obtained by blow-up at the perforation site, which we may assume to be  parameterized by $x$, with $z$ acting as a boundary datum.
For a function $u$ with value $u(x)=z$ at a given point $x$ and for a sequence $u_\varepsilon$ converging to $u$ the condition $u_\varepsilon(x)=z$ is in general in contrast with the constraint that $u_\varepsilon\in F_x$ on the element of the perforation parameterized by $x$. Nevertheless it is not restrictive to suppose that the equality $u_\varepsilon=z$ is (approximately) satisfied close to the perforation. Hence, this last condition is interpreted as a boundary condition; that is, we may assume that, given a target function $u$, the approximating functions $u_\varepsilon$ at given $\varepsilon$ satisfy $u_\varepsilon=u(x)$ on a ball of radius $\delta R$ (with $R$ large) around the corresponding element of the perforation. After scaling the perforation in order to parameterize its effect on the fixed reference set $K$, this boundary condition is given on some large ball $B_R$.
Hence, minimizing on all function with this given boundary condition, we define $\varphi_{\varepsilon, R}$ as 
\begin{equation}\label{eq: cap er}
    \varphi_{\varepsilon, R}(x, u)\coloneqq \inf\left\{ \int_{B(0,R)} \sigma_\varepsilon(\nabla \varv)\, \d z, \text{ where } \varv-u\in W^{1, p}_0(B(0, R); \R^m), \varv(z)\in F_x \hbox{ if }z\in K\right\},
\end{equation}
where $\sigma_\varepsilon(\xi)=\varepsilon^{\frac{dp}{d-p}}\sigma(\varepsilon^{-\frac{d}{d-p}}\xi)$.
The asymptotic effect of the perforation will be described by letting $\varepsilon\to 0$ and letting $R$ diverge. Recalling that under our assumptions $\sigma_\varepsilon$ tend to $\sigma_0$ as $\varepsilon$ tends to $0$, in order to describe this effect we set 
\begin{equation}\label{defpsi}
    \varphi(x, u)\coloneqq \inf\left\{ \int_{\R^d} \sigma_0(\nabla \varv)\, \d z, \text{ where } \varv-u\in W^{1, p}(\R^d; \R^m), \varv(z)\in F_x \hbox{ if }z\in K \right\}.
\end{equation}
If $m=1$, $u=1$ and $F_x=\{0\}$ then this number is a classical quantity called the {\em $p$-capacity} of the set $K$. Formula \eqref{defpsi} is a vector generalization, giving a sort of $p$-capacity of the set $K$ with respect to the constraint $F_x$. Note that $\varphi(x,u)=0$ if $u\in F_x$ and $u\mapsto\varphi(x,u)$ grows as a power $p$ of the distance from $F_x$; $\varphi(x,\cdot)$ can therefore be thought as a {\em $p$-distance from the set $F_x$.}

\smallskip
We next provide two examples concerning the characterization of the function $\varphi$. As a first example, we consider the case of $F_x=\{w(x)\}$ with $w$ a smooth function; that is, at given $\varepsilon$, we fix the value of $u$ on the perforation to be a function $w$, which we can interpret as describing the deformation of the perforation determined by a controlling device internal to the elastic body.
In this case $\varphi=\varphi_0(u(x)-w(x))$, where
\begin{equation}
    \varphi_0(u)\coloneqq \inf\left\{ \int_\Omega \sigma_0(\nabla \varv)\, \d z, \text{ where } \varv\in W^{1, p}(\R^d; \R^m), \varv(z)=u\hbox{ if }z\in K \right\};
\end{equation}
that is, recalling that $\sigma_0$ is positively homogeneous of degree $p$, the new term penalizes a kind of $p$-th power of the distance of $u$ from $w$.

In the second example, we confine the perforation to a fixed set, independent of $x$; that is, $F_x=F$ independent of $x$ and $\sigma_0(\xi)=\|\xi\|^2$. In this case $\varphi(u)=c\,{\rm dist}^2(u, F)$. In particular if $F=\mathbb S^{d-1}$ then $\varphi(u)=c(|u|-1)^2$. Note that, in this case, we obtain in the limit a Ginzburg--Landau functional as a limit of Dirichlet energies with constraints on perforations. If $d=2$ then the power $p=2$ does not satisfy the hypotheses of our theorem below, which nevertheless can be proved under an exponential scaling for the perforation.
\smallskip 

The following result is the prototype on which the following ones are modelled.  
The reason why the size of the perforation is $\delta_\varepsilon=\varepsilon^{d/d-p}$ is that, following the scaling procedure described above, in this regime the minimal energy of each perforation at a point $x$ is $\varepsilon^d\varphi(x,u(x))$. Since the perforations are distributed on a cubic lattice of size $\varepsilon$, the summation of these terms produces the integral $\int_\Omega \varphi(x,u(x))\,\d x$.
Again we note that the statement as a theorem concerning the computation of a $\Gamma$-limit is a useful shorthand to express the fact that problems involving energies $\mathcal F_\varepsilon$ in \eqref{eq: thm1-1} with given forcing terms and boundary conditions converge to the corresponding problems for $\mathcal F$ in \eqref{eq: thm1-2}. The only care, as boundary conditions are concerned is that they should not be in contrast with the constraint on the perforation. This technical point is achieved either if the boundary values are compatible with the constraints, or, for example, if elements of the perforation at scale of order $\varepsilon$ from the boundary are ignored, a condition which does not change the claim of the theorem.

\begin{thm}[Limit analysis for periodic bulk perforations]\label{thm1}
    Let $\sigma$ and $x\mapsto F_x$ satisfy the assumptions stated in the previous paragraphs, let $1<p<d$, and let $\delta_\varepsilon=\varepsilon^{{d}/{d-p}}$. If we set 
    $x^\varepsilon_i=\varepsilon i$ for all $i\in\mathbb Z^d$ such that $x_i^\varepsilon+ \delta_\varepsilon K\subset\Omega$, and let
    \begin{equation}\label{eq: thm1-1}
      \F_\varepsilon(u)\coloneqq\begin{cases}
            \displaystyle\int_\Omega \sigma(\nabla u)\d x \quad &\text{if } u(x)\in F_{x_i^\varepsilon}\text{ for all }i\hbox{ and } x\in {x_i^\varepsilon+ \delta_\varepsilon K},\\
            +\infty & \text{otherwise},
        \end{cases} 
    \end{equation}
    for $u\in W^{1,p}(\Omega;\mathbb R^m)$; then the $\Gamma$-limit of $\F_\varepsilon$ as $\varepsilon\to 0$ in the $L^p$ and weak $W^{1,p}$ topologies is
      \begin{equation}\label{eq: thm1-2}
   \F(u)\coloneqq \int_{\Omega}\sigma(\nabla u)\d x+\int_{\Omega}\varphi(x, u(x))\,\d x,
    \end{equation}
    with $\varphi$ in \eqref{defpsi},
    with domain $W^{1,p}(\Omega;\mathbb R^m)$.
\end{thm}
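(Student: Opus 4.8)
\emph{Proof plan.} We argue by the two $\Gamma$-convergence inequalities, after equi-coercivity and some preliminary properties of $\varphi$. Equi-coercivity follows from the lower bound in Condition~2 on $\sigma$: $\F_\varepsilon(u)\ge c_1\int_\Omega\|\nabla u\|^p\,\d x-|\Omega|/c_1$, so a sequence with equibounded energy (and, say, prescribed average or boundary datum) is bounded in $W^{1,p}(\Omega;\R^m)$, hence precompact in the weak $W^{1,p}$ and strong $L^p$ topologies. From Conditions~1--3 on $x\mapsto F_x$ and the upper $p$-growth of $\sigma_0$ one checks: $0\le\varphi(x,u)\le C(1+|u|^p)$ (take the competitor that equals the Lipschitz section $s(x)$ far out and interpolates through $\psi_j$, recalling $\sigma_0(0)=0$); $\varphi(x,\cdot)$ is continuous and behaves like the $p$-th power of $\operatorname{dist}(\cdot,F_x)$; and $x\mapsto\varphi(x,u)$ is continuous, by transporting competitors via the diffeomorphisms $\Phi_{x_1,x_2}$ and estimating the change of energy through ii). In particular $\F$ is finite and lower semicontinuous on $W^{1,p}(\Omega;\R^m)$. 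We also use, for $R>0$, the truncated capacities $\varphi^R$ defined as $\varphi$ in \eqref{defpsi} but over $v-u\in W^{1,p}_0(B(0,R);\R^m)$: a cut-off argument gives $\varphi^R\downarrow\varphi$ as $R\to\infty$, while \eqref{quantitative} (applied with $t=\varepsilon^{-d/(d-p)}$, so that $\sigma_\varepsilon=\sigma(t\,\cdot)/t^p$) yields $|\varphi_{\varepsilon,R}(x,u)-\varphi^R(x,u)|\le C\varepsilon^{\beta}(1+|u|^p)$ with $\beta>0$, enough for the diagonal arguments below.

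\emph{The $\Gamma$-limsup inequality.} Since the $\Gamma$-upper limit is lower semicontinuous and $\F$ is continuous along $W^{1,p}$-approximations of $u$ by smooth functions (dominated convergence, using the growth of $\varphi$ and continuity of $\sigma$), it suffices to build a recovery sequence for $u\in C^\infty(\overline\Omega;\R^m)$. Fix $R$ and $\eta>0$; for each admissible site $x_i^\varepsilon$ choose $w_i$ $\eta$-optimal for $\varphi_{\varepsilon,R}(x_i^\varepsilon,u(x_i^\varepsilon))$ in \eqref{eq: cap er}, set $u_\varepsilon(x):=w_i((x-x_i^\varepsilon)/\delta_\varepsilon)$ on $B(x_i^\varepsilon,\delta_\varepsilon R)$, interpolate linearly between the constant $u(x_i^\varepsilon)$ and $u$ on the thin shell $B(x_i^\varepsilon,2\delta_\varepsilon R)\setminus B(x_i^\varepsilon,\delta_\varepsilon R)$ --- here smoothness gives $|u-u(x_i^\varepsilon)|\lesssim\delta_\varepsilon R$ --- and $u_\varepsilon:=u$ elsewhere, leaving untouched the $o(\varepsilon^{-d})$ sites within $2\delta_\varepsilon R$ of $\partial\Omega$. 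Changing variables, $\int_{B(x_i^\varepsilon,\delta_\varepsilon R)}\sigma(\nabla u_\varepsilon)\,\d x=\varepsilon^d\int_{B(0,R)}\sigma_\varepsilon(\nabla w_i)\,\d z\le\varepsilon^d(\varphi_{\varepsilon,R}(x_i^\varepsilon,u(x_i^\varepsilon))+\eta)$; the shells cost $\lesssim R^p(\delta_\varepsilon/\varepsilon)^d|\Omega|\to0$ in total; the rest contributes $\int_\Omega\sigma(\nabla u)\,\d x+o(1)$ since the perforation has vanishing volume. Summing over $i$, the perforation part is a Riemann sum converging, as $\varepsilon\to0$, to $\int_\Omega(\varphi^R(x,u(x))+\eta)\,\d x$ (using $\varphi_{\varepsilon,R}\to\varphi^R$ and continuity of $x\mapsto\varphi^R(x,u(x))$). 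A similar estimate gives $u_\varepsilon\to u$ in $L^p$. Letting $\eta\to0$, then $R\to\infty$ (with $\varphi^R\downarrow\varphi$), and taking a diagonal $R=R_\varepsilon\to\infty$ slowly, we obtain $\limsup_\varepsilon\F_\varepsilon(u_\varepsilon)\le\int_\Omega\sigma(\nabla u)\,\d x+\int_\Omega\varphi(x,u)\,\d x$, as in \eqref{eq: thm1-2}.

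\emph{The $\Gamma$-liminf inequality.} Let $u_\varepsilon\rightharpoonup u$ in $W^{1,p}$ with $\liminf_\varepsilon\F_\varepsilon(u_\varepsilon)<\infty$; passing to a subsequence we may assume the liminf is a limit, $\sup_\varepsilon\int_\Omega\|\nabla u_\varepsilon\|^p<\infty$, and that the nonnegative measures $(\sigma(\nabla u_\varepsilon)+1/c_1)\,\mathcal L^d$ converge weakly-$*$ on $\overline\Omega$ to a finite measure $\mu$. It suffices to show $\mu\ge(\sigma(\nabla u)+1/c_1+\varphi^R(x,u))\,\mathcal L^d$ for each fixed $R$. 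Fix $R$; for $\mathcal L^d$-a.e.\ $x_0$ (a Lebesgue point of the right-hand density and an approximate differentiability point of $u$) and a.e.\ small $r$ with $\mu(\partial B(x_0,r))=0$, split $B(x_0,r)$ into the balls $B(x_i^\varepsilon,\delta_\varepsilon R)$ and the complement. On the complement we run a joining argument: inside each $B(x_i^\varepsilon,\delta_\varepsilon R^2)\setminus B(x_i^\varepsilon,\delta_\varepsilon R)$ there is, among $\sim\log R$ dyadic sub-annuli, one on which $u_\varepsilon$ can be interpolated to a constant at cost $\lesssim(\log R)^{-1}\int_{B(x_i^\varepsilon,\delta_\varepsilon R^2)}\|\nabla u_\varepsilon\|^p$ (pigeonhole and Poincar\'e), so that the total modification cost is $\lesssim(\log R)^{-1}$. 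This produces $\hat u_\varepsilon\rightharpoonup u$ with $\int_{B(x_0,r)}\sigma(\nabla\hat u_\varepsilon)\le\int_{B(x_0,r)\setminus\bigcup_iB(x_i^\varepsilon,\delta_\varepsilon R)}\sigma(\nabla u_\varepsilon)+C(\log R)^{-1}+o(1)$, whence, by quasiconvexity, the $p$-growth and the classical weak lower semicontinuity theorem, $\liminf_\varepsilon\int_{B(x_0,r)\setminus\bigcup_i\cdots}\sigma(\nabla u_\varepsilon)\ge\int_{B(x_0,r)}\sigma(\nabla u)\,\d x-C(\log R)^{-1}$. On each perforation ball a slicing on a thin annulus (this one kept inside $B(x_i^\varepsilon,\delta_\varepsilon R)$) replaces the boundary trace of $u_\varepsilon$ by the constant $c_i^\varepsilon=\fint u_\varepsilon$ on a good sphere, giving $\int_{B(x_i^\varepsilon,\delta_\varepsilon R)}\sigma(\nabla u_\varepsilon)\ge\varepsilon^d\varphi_{\varepsilon,R}(x_i^\varepsilon,c_i^\varepsilon)-(o(1)+C(\log R)^{-1})$; since $u_\varepsilon\to u$ in $L^p$ and $x_0$ is a Lebesgue point, $c_i^\varepsilon\to u(x_0)$ in average over $\{i:x_i^\varepsilon\in B(x_0,r)\}$, and with $\#\{i:x_i^\varepsilon\in B(x_0,r)\}\,\varepsilon^d\to|B(x_0,r)|$ the perforation part tends to $\varphi^R(x_0,u(x_0))|B(x_0,r)|+o_r(|B(x_0,r)|)$. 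Dividing by $|B(x_0,r)|$, letting $r\to0$ (differentiation of $\mu$), then $R\to\infty$, and integrating over $\Omega$, we get $\liminf_\varepsilon\F_\varepsilon(u_\varepsilon)\ge\int_\Omega\sigma(\nabla u)\,\d x+\int_\Omega\varphi(x,u)\,\d x$.

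\emph{Main obstacle.} The delicate point is the liminf inequality: separating the diffuse bulk energy (recovered by quasiconvex weak lower semicontinuity) from the energy concentrated near the perforations (recovered by the blow-up/capacity estimate) without double counting and without equi-integrability of $\|\nabla u_\varepsilon\|^p$. The mechanism that makes this work is precisely the $\log R$-annuli joining lemma, whose cost is $O((\log R)^{-1})$ times the bounded total gradient energy; this dictates the order of limits ($\varepsilon\to0$, then $R\to\infty$) and rests on the balance $\delta_\varepsilon=\varepsilon^{d/(d-p)}$, which makes $\int_{B(x_i^\varepsilon,\delta_\varepsilon R)}\sigma(\nabla\,\cdot\,)$ equal to $\varepsilon^d$ times $\int_{B(0,R)}\sigma_\varepsilon(\nabla\,\cdot\,)$ at unit scale, so that $\sum_i$ turns into $\int_\Omega$. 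The remaining ingredients --- the properties of $\varphi$ from Conditions~1--3, $\varphi_{\varepsilon,R}\to\varphi^R$ from \eqref{quantitative}, $\varphi^R\to\varphi$, and the sites near $\partial\Omega$ --- are routine; the same circle of ideas covers the critical case $p=d$ at the price of heavier logarithmic bookkeeping.
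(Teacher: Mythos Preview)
Your overall plan---separate the bulk and perforation contributions via a joining lemma on annuli, recover the bulk by quasiconvex lower semicontinuity, and the perforation part via the capacity formula---is the right one and matches the paper's strategy; the upper bound is fine (indeed a bit more direct than the paper's, which also runs the joining lemma on the constant sequence $u$). The liminf, however, has a genuine gap in the \emph{scale} at which you do the joining.

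You place the joining annuli inside $B(x_i^\varepsilon,\delta_\varepsilon R^2)$, so the constants $c_i^\varepsilon$ are averages of $u_\varepsilon$ on spheres of radius comparable to $\delta_\varepsilon R=\varepsilon^{d/(d-p)}R$ with $R$ fixed. For the piecewise-constant interpolant $\sum_i c_i^\varepsilon\,\mathbbm{1}_{\varepsilon(i+Q)}$ to converge to $u$ in $L^p$ (which is what you actually need---$\varphi$ is not convex, so ``convergence in average'' does not suffice), the scaled Poincar\'e inequality on each $\varepsilon$-cell requires the averaging radius $\rho_\varepsilon$ to satisfy $\varepsilon^{d+p}/\rho_\varepsilon^{\,d}\to 0$, i.e.\ $\rho_\varepsilon\gg\varepsilon^{1+p/d}$. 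But $\tfrac{d}{d-p}>1+\tfrac{p}{d}$ (equivalently $p^2>0$), so $\delta_\varepsilon R\ll\varepsilon^{1+p/d}$ for every fixed $R$: the scale is strictly too small, the estimate $\|u_\varepsilon-\sum_i c_i^\varepsilon\mathbbm{1}_{\varepsilon(i+Q)}\|_{L^p}^p\lesssim(\varepsilon^{d+p}/\rho_\varepsilon^{\,d})\|\nabla u_\varepsilon\|_{L^p}^p$ blows up, and the assertion ``$c_i^\varepsilon\to u(x_0)$ in average'' is unjustified. The paper's fix is to join at an \emph{intermediate} scale $\rho_\varepsilon$ with $\varepsilon^{1+p/d}\ll\rho_\varepsilon\ll\varepsilon^{(d-\alpha p)/(d-p)}$; the capacitary problem is then posed on $B(0,R_\varepsilon)$ with $R_\varepsilon=\rho_\varepsilon/\delta_\varepsilon\to\infty$, and one proves directly $\varphi_{\varepsilon,R_\varepsilon}\to\varphi$ locally uniformly, bypassing your two-step limit $\varphi_{\varepsilon,R}\to\varphi^R\to\varphi$. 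A secondary gap: you never reduce to equibounded $u_\varepsilon$. The convergence $\varphi_{\varepsilon,R}\to\varphi^R$ is only locally uniform on $\overline\Omega\times B(0,L)$, so without a truncation you cannot pass to the limit in $\sum_i\varepsilon^d\varphi_{\varepsilon,R}(x_i^\varepsilon,c_i^\varepsilon)$. The paper's truncation lemma uses precisely Condition~2 on $F_x$ (the maps $\psi_j$) to replace $u_\varepsilon$ by an $L^\infty$-bounded sequence still satisfying the constraint, at arbitrarily small energy cost; this step is not routine and is where Condition~2 is genuinely used.
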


This result and the following are stated in terms of the displacement $u$, but can equivalently stated for energies and constraints $y(x)\in F_x$ formulated in terms of the deformation $y$. More precisely, keeping the same notation for the constraint, we can consider energies defined by 
   \begin{equation}
{\mathcal G}_\varepsilon(y)\coloneqq\begin{cases}
      \displaystyle\int_\Omega \sigma(\nabla y)\d x \quad &\text{if } y(x)\in F_{x_i^\varepsilon}\text{ for all }i\hbox{ and } x\in {x_i^\varepsilon+ \delta_\varepsilon K},\\
            +\infty & \text{otherwise},
        \end{cases} 
    \end{equation}
    for $y\in W^{1,p}(\Omega;\mathbb R^m)$, and prove that
the $\Gamma$-limit of ${\mathcal G}_\varepsilon$ as $\varepsilon\to 0$ is
      \begin{equation}
   {\mathcal G}(y)\coloneqq \int_{\Omega}\sigma(\nabla y)\d x+\int_{\Omega}\varphi(x, y(x))\,\d x
    \end{equation}
    with domain $W^{1,p}(\Omega;\mathbb R^m)$.
    
Indeed, replacing $u(x)=y(x)-x$ in $\mathcal G_\varepsilon$, we can define
   \begin{equation}    \widetilde\F_\varepsilon(u)\coloneqq\begin{cases}          \displaystyle\int_\Omega \sigma(\nabla u-id)\d x \quad &\text{if } u(x)\in F_{x_i^\varepsilon}-x\text{ for all }i\hbox{ and } x\in {x_i^\varepsilon+ \delta_\varepsilon K},\\
            +\infty & \text{otherwise},
        \end{cases} 
    \end{equation}
    whose $\Gamma$-limit is proven, with a small-perturbation argument, to be  the same as that of
   \begin{equation}    \F_\varepsilon(u)\coloneqq\begin{cases}          \displaystyle\int_\Omega \sigma(\nabla u-id)\d x \quad &\text{if } u(x)\in F_{x_i^\varepsilon}-x_i^\varepsilon \text{ for all }i\hbox{ and } x\in {x_i^\varepsilon+ \delta_\varepsilon K},\\
            +\infty & \text{otherwise},
        \end{cases} 
    \end{equation}
    which is in the form required to apply Theorem \ref{thm1}.
Noting that the recession function of $\xi\mapsto\sigma(\xi-id)$ is the same as that of $\sigma$, defining $\varphi$ as in \eqref{defpsi}, Theorem \ref{thm1} gives the $\Gamma$-limit corresponding to the constraints $F_x-x$; that is,
     \begin{equation}
   \F(u)= \int_{\Omega}\sigma(\nabla u -id)\d x+\int_{\Omega}\varphi(x, u(x)-x)\,\d x=\mathcal G(y),
    \end{equation}
and the claim.

\smallskip

We give a proof of Theorem \ref{thm1}. As usual, the computation of the $\Gamma$-limit is split into a lower bound, giving an ansatz-free estimate on the energy of an arbitrary sequence converging to $u$, and an upper bound, which amounts to the construction of a {\em recovery sequence} along which the lower bound is matched (up to an arbitrarily small error) (see \cite{GCB}). The construction of the recovery sequence is linked in our case to the definition of the term $\varphi$, and is obtained by taking, loosely speaking, scaled optimal functions for the problem defining $\varphi(x,u(x))$ close to the perforation, and the function $u(x)$ far from the perforation.

\begin{proof}[Proof of Theorem {\rm \ref{thm1}}]
We only give the main steps of the proof, confining the technicalities to some results in dedicated lemmas reported in Appendix~\ref{app:lemmas}.

    {\em Lower bound}.
    Let $u_\varepsilon$ be an arbitrary sequence converging to $u$ in $W^{1, p}(\Omega,\mathbb R^m)$ and let $u_{\varepsilon_h}$ be a subsequence realizing $\liminf_{\varepsilon\to 0} \mathcal F_\varepsilon(u_\varepsilon)$. With fixed $\eta>0$, by Lemma \ref{lemma: truncation} there exist $\varv_h$  converging weakly to some $\varv$ in $W^{1, p}(\Omega,\mathbb R^m)$, $|\varv_h|\leq R_\eta$, 
    \begin{equation}\label{eq: stima p}
        \|\varv-u\|_{W^{1, p}}=o_{\eta}(1),
    \end{equation}
    and \begin{equation} \label{eq: stima energia}
        \liminf_{h\to+\infty} \mathcal F_{\varepsilon_h}(\varv_h)\leq \liminf_{\varepsilon\to 0} \mathcal F_\varepsilon(u_\varepsilon)+o_\eta(1)
    \end{equation}
    as $\eta\to 0$.
    Moreover, by Lemma \ref{lemma: change bd}, with fixed $N\ge 1$ we can assume that $\varv_h$ is constant on spherical surfaces $S^{\varepsilon_h}_j$ containing the perforation (see the statement of the lemma for a precise definition, choosing $\rho_\varepsilon$ as in \ref{lemma: pw}) indexed by $j\in\{1,\ldots, N\}$, up to replacing \eqref{eq: stima p} with 
        $\|\varv-u\|_{W^{1, p}}=o_{\eta, N}(1)$ 
    and \eqref{eq: stima energia} with 
    $\liminf\limits_{h\to+\infty} \mathcal F_{\varepsilon_h}(\varv_h)\leq \liminf\limits_{\varepsilon\to 0} \mathcal F_\varepsilon(u_\varepsilon)+o_{\eta, N}(1)$
    as $\eta\to 0$ and $N\to+\infty$.

    Let now $w_h$ be defined by extending $\varv_h$ as the corresponding constant in the balls whose boundaries are the spherical surfaces $S^{\varepsilon_h}_j$.
    Note that $w_h$ still converge to $\varv$.
    Setting $R_\varepsilon=\frac{\rho_{\varepsilon}}{\delta_\varepsilon}$ and $\varphi_h(x, z)=\varphi_{\varepsilon_h, R_{\varepsilon_h}}(x, z)$ as in \eqref{eq: approx cap} we can now estimate 
    \begin{align}
        \liminf_{h\to+\infty} \mathcal F_{\varepsilon_h}(\varv_h)&\geq \liminf_{h\to+\infty} \int_{\Omega\setminus \bigcup B^{\varepsilon_h}_j}\sigma(\nabla \varv_h)\,\d x+\sum_j\int_{B^{\varepsilon_h}_j} \sigma(\nabla \varv_h)\,\d x\\
        &\geq \liminf_{h\to+\infty}\int_\Omega \sigma(\nabla w_h)\,\d x+\liminf_{h\to+\infty} \sum_j \varepsilon_h^d \varphi_{\varepsilon_h, 2^{-2 i_{\varepsilon_h}(j)-1}\frac{\rho_{\varepsilon_h}}{\delta}}(\varepsilon_h j, \bar{\varv}^h_j)\\
        &\geq\int_\Omega \sigma(\nabla \varv)\,\d x+\liminf_{h\to+\infty} \int_\Omega \varphi_{\varepsilon_h, \frac{\rho_{\varepsilon_h}}{\delta}}\bigg(\sum_j \mathbbm{1}_{\varepsilon_h \left(j+(-\frac{1}{2}, \frac{1}{2}\right)^d)} \varepsilon_h j, \sum_j \mathbbm{1}_{\varepsilon_h \left(j+(-\frac{1}{2}, \frac{1}{2})^d\right)}\bar{\varv}^{\varepsilon_h}_j\bigg)\,\d x
        \\
        &\geq \int_\Omega \sigma(\nabla \varv)\,\d x +\liminf_{h\to+\infty}\int_\Omega \varphi_h\bigg(\sum_j \mathbbm{1}_{\varepsilon_h \left(j+(-\frac{1}{2}, \frac{1}{2}\right)^d)} \varepsilon_h j, \sum_j \mathbbm{1}_{\varepsilon_h \left(j+(-\frac{1}{2}, \frac{1}{2})^d\right)}\bar{\varv}^{\varepsilon_h}_j\bigg)\,\d x. \label{eq: lower bound 1}
    \end{align}
    Now, by Lemma \ref{lemma: change bd} we know that
       $ \sum_j \mathbbm{1}_{\varepsilon_h \left(j+(-\frac{1}{2}, \frac{1}{2})^d\right)}\bar{\varv}^{\varepsilon_h}_j$ converges in $L^p(\Omega;\mathbb R^m)$ to $\varv$,
    and by Lemma \ref{lemma: uniform} $\varphi_h$ converges to $\varphi$ uniformly on $\overline{\Omega}\times B(0, L)$. In particular, this gives 
    \begin{equation}\label{eq: lower bound 2}
        \lim_{h\to+\infty} \int_\Omega \varphi_h\bigg(\sum_j \mathbbm{1}_{\varepsilon_h \big(j+(-\frac{1}{2}, \frac{1}{2})^d\big)} \varepsilon_h j, \sum_j \mathbbm{1}_{\varepsilon_h \left(j+(-\frac{1}{2}, \frac{1}{2})^d\right)}\bar{\varv}^{\varepsilon_h}_j\bigg)\,\d x= \int_\Omega \varphi(x, \varv(x))\,\d x.
    \end{equation}
    Following on from \eqref{eq: lower bound 1} and applying \eqref{eq: lower bound 2} we conclude
    \begin{equation}
        \liminf_{\varepsilon\to 0} \mathcal F_\varepsilon(u_\varepsilon)\geq \liminf_{h\to+\infty} \mathcal F_{\varepsilon_h}(\varv_h)-o_{\eta, N}(1)\geq \mathcal F(v)- o_{\eta, N}(1).
    \end{equation}
    By letting $\eta$ to 0 and $N$ to $+\infty$, we finally deduce that
$\mathcal F(u)\leq \liminf\limits_{\varepsilon\to 0} \mathcal F_\varepsilon(u_\varepsilon).$

{\em Upper bound}.
Let $u\in W^{1, p}(\Omega;\R^m)$ to be approximated. Without loss of generality, we may assume that $u$ is bounded, say $|u|\leq L$.
With fixed $N>1$, we can apply Lemma \ref{lemma: change bd} applied to $u_\varepsilon= u$ for all $\varepsilon$ and $\rho_\varepsilon$ as in Lemma \ref{lemma: pw},
obtaining a sequence $(\varv_\varepsilon)_{\varepsilon>0}$ constant on the boundary of some balls $B^\varepsilon_{i, j}$ close to the perforation.

We now modify the sequence $\varv_\varepsilon$ in the balls $B^\varepsilon_{i, j}$ in order to produce the required recovery sequence. 
Let $\eta>0$; by Lemma \ref{lemma: uniform} there exists $\varepsilon_0$ small enough such that $|\varphi_{\varepsilon, R_\varepsilon^i}(x, u_0)-\varphi(x, u_0)|\leq \eta$ for $|u_0|\leq L$  and $0<\varepsilon<\varepsilon_0$,
with $R_\varepsilon^i=2^{-2 i_\varepsilon(j)-1}\frac{\rho_\varepsilon}{\delta_\varepsilon}$ for $i=1, \ldots, N$.
Let now $\varepsilon<\varepsilon_0$ and $j\in\mathbb Z^d$ be such that $B^\varepsilon_{i, j}\in \Omega$. Let $\zeta_\varepsilon^{i, j}$ be a valid competitor for \eqref{eq: cap er} such that 
\begin{equation}
    \int_{B(0, R^i_{\varepsilon})} \sigma_\varepsilon(\nabla \zeta_\varepsilon^{i, j})\,\d z\leq \varphi_{\varepsilon, R_\varepsilon^i}(x, \bar{\varv}_j^\varepsilon)+\eta.
\end{equation}
Finally, for $\varepsilon<\varepsilon_0$, define 
\begin{equation}
    u_\varepsilon(x)=
    \begin{cases}
        \zeta_\varepsilon^{i, j}(\delta_\varepsilon (x-\varepsilon j) )\quad&\text{ if }x\in B^\varepsilon_{i, j},\\
        \varv_\varepsilon(x)\quad&\text{ otherwhise.}
    \end{cases}
\end{equation}

We conclude by proving that $u_\varepsilon$ is a recovery sequence. It holds
\begin{align}
    \limsup_{\varepsilon \to 0}\int_\Omega \sigma(\nabla u_\varepsilon)&\leq \limsup_{\varepsilon \to 0}\int_{\Omega\setminus \bigcup_j B^\varepsilon_{i, j}}\sigma(\nabla \varv_\varepsilon)\,\d z+\limsup_{\varepsilon\to 0}\sum_j \int_{B^\varepsilon_{i, j}}\sigma(\nabla u_\varepsilon)\,\d z\\
    &=\int_\Omega \sigma(\nabla u)\,\d z+ \limsup_{\varepsilon\to 0}\varepsilon^d \sum_j \int_{B(0, R^i_\varepsilon)}\sigma_\varepsilon(\nabla \zeta_\varepsilon^{i, j})\,\d z
\end{align}
where the last equality comes from the equi-integrability of $|\nabla \varv_\varepsilon|^p$.
Now
\begin{align}
    \limsup_{\varepsilon\to 0}\varepsilon^d \sum_j \int_{B(0, R^i_\varepsilon)}\sigma_\varepsilon(\nabla\zeta_\varepsilon^{i, j} )\,\d z&\leq \limsup_{\varepsilon\to 0}\bigg(\varepsilon^d \sum_j \varphi_{\varepsilon, R_\varepsilon^i}(\varepsilon j, \bar{\varv}_j^\varepsilon)+\eta|\Omega+B(0, \varepsilon)|\bigg)\\
    &\leq \limsup_{\varepsilon\to 0} \int_\Omega \varphi\bigg(\sum_j \mathbbm{1}_{\varepsilon \left(j+(-\frac{1}{2}, \frac{1}{2}\right)^d)} \varepsilon j, \sum_j \mathbbm{1}_{\varepsilon \left(j+(-\frac{1}{2}, \frac{1}{2})^d\right)}\bar{\varv}^{\varepsilon}_j\bigg)\,\d z\\
    &\hskip2cm+ 2\eta\,|\Omega+B(0, \varepsilon)|.
\end{align}
By recalling \eqref{eq: lower bound 2} we conclude that
    $\limsup\limits_{\varepsilon\to 0} \mathcal F_\varepsilon(u_\varepsilon)\leq \mathcal F(u)+C\eta.$
By sending $\eta\to 0$ we conclude.
\end{proof}

An interesting variation on the previous result is obtained when perforations are not distributed uniformly but have a local limit density described by a (bounded) function $\rho$. In this case, the relaxed constraint takes the form
$$
\int_\Omega \varphi(x,u(x))\,\rho(x)\,\d x,
$$
We state a version of this result obtained by deforming the reference lattice on which the perforation is parameterized. This deformation is obtained by introducing a possible local variation of the oscillations and orientation described by a smooth diffeomorphism. In this case the density $\rho$ is given in terms of the Jacobian determinant of the transformation.
We note that we could treat distributions of the perforations with much less regularity, and also random distributions, upon requiring that perforations cannot accumulate or be too sparse. For simplicity of notation, we limit to considering perforations that can be parameterized on cubic lattices, since a precise statement in the general case would need some technicalities.

\begin{thm}[Limit analysis for locally periodic bulk perforations]\label{thm2}
 Let $\Psi\colon\R^d\to\R^d$ be a smooth diffeomorphism and set $$K^\varepsilon_i=\Psi(x^\varepsilon_i)+\delta_\varepsilon K$$ 
 for all $i\in\mathbb Z^{d}$ such that $K^\varepsilon_i\subset\Omega$, (that is, we center the elements of the perforation in the set obtained as the image of a cubic lattice by $\Psi$).
 Let the assumptions on $f$ and $x\mapsto F_x$ be as in Theorem \ref{thm1}, let $1<p<d$, let $\delta_\varepsilon=\varepsilon^{\frac{d}{d-p}}$, and let 
 \begin{equation}
\F_\varepsilon(u)\coloneqq\begin{cases}
            \displaystyle\int_\Omega \sigma(\nabla u)\d x \quad &\text{if } u(z)\in F_{\Psi(x_i^\varepsilon)}\text{ for all }i \hbox{ and } z\in K^\varepsilon_i,\\
            +\infty & \text{otherwise}.
        \end{cases} 
    \end{equation}
    Then the $\Gamma$-limit of $\F_\varepsilon$ as $\varepsilon\to 0$ in the $L^p$ and weak $W^{1,p}$ topologies is
    \begin{equation}
   \F(u)\coloneqq \int_{\Omega}\sigma(\nabla u)\d x+\int_{\Omega}\varphi(x, u(x))\,\rho(x)\,\d x,
    \end{equation}
       with domain $W^{1,p}(\Omega;\mathbb R^m)$,
    where $\rho(x)=|{\rm det}(\nabla\Psi(\Psi^{-1}(x)))|^{-1}$.
\end{thm}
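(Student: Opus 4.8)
The plan is to reduce Theorem \ref{thm2} to Theorem \ref{thm1} by a change of variables that straightens the deformed lattice $\Psi(\varepsilon\mathbb Z^d)$ back to the cubic lattice $\varepsilon\mathbb Z^d$. Concretely, I would set $y=\Psi^{-1}(x)$ and, given $u\in W^{1,p}(\Omega;\mathbb R^m)$, define $\tilde u(y)\coloneqq u(\Psi(y))$ on $\tilde\Omega\coloneqq\Psi^{-1}(\Omega)$. Then $\nabla u(x)=\nabla\tilde u(y)\,\nabla\Psi^{-1}(x)$, and by the area formula
\[
\int_\Omega\sigma(\nabla u)\,\d x=\int_{\tilde\Omega}\sigma\bigl(\nabla\tilde u(y)\,\nabla\Psi(y)^{-1}\bigr)\,|\det\nabla\Psi(y)|\,\d y.
\]
So after the change of variables the energy becomes $\int_{\tilde\Omega}\tilde\sigma(y,\nabla\tilde u)\,\d y$ with $\tilde\sigma(y,\xi)=|\det\nabla\Psi(y)|\,\sigma(\xi\,\nabla\Psi(y)^{-1})$, and the constraint $u(z)\in F_{\Psi(x_i^\varepsilon)}$ for $z\in K_i^\varepsilon=\Psi(x_i^\varepsilon)+\delta_\varepsilon K$ becomes a constraint on $\tilde u$ on sets close to $x_i^\varepsilon+\delta_\varepsilon\,(\nabla\Psi(x_i^\varepsilon))^{-1}K$, with constraint sets $\tilde F_{x_i^\varepsilon}=F_{\Psi(x_i^\varepsilon)}$. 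This is — up to an $x$-dependent energy density, an $x$-dependent (but smoothly varying) perforation shape, and the usual care near $\partial\Omega$ — exactly the situation of Theorem \ref{thm1}.

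The key steps, in order, are: (i) verify that $\tilde\sigma(y,\cdot)$ inherits quasiconvexity, $p$-growth, and the quantitative recession assumption \eqref{quantitative} uniformly in $y$, with recession function $\tilde\sigma_0(y,\xi)=|\det\nabla\Psi(y)|\,\sigma_0(\xi\,\nabla\Psi(y)^{-1})$ (this uses only that $\nabla\Psi$ and $\nabla\Psi^{-1}$ are bounded with bounded inverse, $\Psi$ being a smooth diffeomorphism on the relevant compact region); (ii) freeze the coefficient, i.e. replace $\tilde\sigma(y,\xi)$ on the cube $\varepsilon(i+(-\tfrac12,\tfrac12)^d)$ by $\tilde\sigma(x_i^\varepsilon,\xi)$, the error being $o(1)$ by uniform continuity of $y\mapsto\nabla\Psi(y)$ and equi-integrability of $|\nabla\tilde u_\varepsilon|^p$ along sequences of bounded energy; (iii) likewise freeze the perforation shape, replacing $\delta_\varepsilon(\nabla\Psi(y))^{-1}K$ near $x_i^\varepsilon$ by $\delta_\varepsilon(\nabla\Psi(x_i^\varepsilon))^{-1}K$, a small bi-Lipschitz deformation of a fixed scaled shape; (iv) re-run the blow-up defining $\varphi$ with the frozen data, obtaining at the site $x_i^\varepsilon$ the cell energy
\[
\varphi^{\Psi}(x_i^\varepsilon,z)\coloneqq\inf\Bigl\{\int_{\R^d}\tilde\sigma_0(x_i^\varepsilon,\nabla \varv)\,\d w:\ \varv-z\in W^{1,p}(\R^d;\R^m),\ \varv(w)\in \tilde F_{x_i^\varepsilon}\text{ if }w\in (\nabla\Psi(x_i^\varepsilon))^{-1}K\Bigr\};
\]
and (v) undo the change of variables to identify $\varphi^{\Psi}(x_i^\varepsilon,z)$ with $\varphi(\Psi(x_i^\varepsilon),z)$ times the Jacobian factor — the scaling $w\mapsto\nabla\Psi\cdot w$ inside the blow-up integral exactly cancels the $|\det\nabla\Psi(y)|$ prefactor and transforms the constraint shape $(\nabla\Psi)^{-1}K$ back to $K$, leaving $\int_{\R^d}\sigma_0(\nabla\varv)\,\d w$ over $\varv(w)\in F_{\Psi(x_i^\varepsilon)}$ for $w\in K$, i.e. precisely $\varphi(\Psi(x_i^\varepsilon),z)$. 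Summing the $\varepsilon^d$-weighted cell energies over $i$ and passing to the Riemann-sum limit in the $y$-variable gives $\int_{\tilde\Omega}\varphi(\Psi(y),\tilde u(y))\,\d y$; changing variables back to $x=\Psi(y)$ produces the claimed $\int_\Omega\varphi(x,u(x))\,|\det\nabla\Psi(\Psi^{-1}(x))|^{-1}\,\d x$, which is the asserted density $\rho(x)=|\det(\nabla\Psi(\Psi^{-1}(x)))|^{-1}$.

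The main obstacle is step (iii)–(v): one must show that replacing the $y$-dependent perforation shape $(\nabla\Psi(y))^{-1}K$ by its frozen counterpart, and then rescaling the blow-up domain by the linear map $\nabla\Psi(x_i^\varepsilon)$, changes the capacitary quantity $\varphi$ by only an $o(1)$ amount, uniformly over the (compactly contained) sites and over $z$ in bounded sets. This is essentially a continuity-of-capacity statement under bi-Lipschitz perturbations of the obstacle, and it is the analogue here of Lemma \ref{lemma: uniform} in the proof of Theorem \ref{thm1}; once one has the uniform convergence $\varphi_{\varepsilon,R}^{\Psi}(x,z)\to\varphi^{\Psi}(x,z)=\rho(x)^{-1}\varphi(\Psi(x),z)$ on $\overline{\tilde\Omega}\times B(0,L)$, the lower- and upper-bound arguments are verbatim those of Theorem \ref{thm1}, with the cubic lattice playing its usual role after the straightening. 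The remaining points — verifying assumptions 1–3 on $\tilde F$, which hold because $\tilde F_{x}=F_{\Psi(x)}$ and $\Psi$ is a smooth diffeomorphism so the family $(\Phi_{x_1,x_2})$ and the functions $s,\psi_j$ transfer directly — and the standard boundary-layer argument for elements of the perforation within $O(\varepsilon)$ of $\partial\Omega$, are routine and can be dispatched as in the first theorem.
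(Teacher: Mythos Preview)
Your plan is sound and would yield the result, but it takes a considerably more involved route than the paper. The paper does \emph{not} change variables in the functional. Its key observation is that the perforation $K_i^\varepsilon=\Psi(x_i^\varepsilon)+\delta_\varepsilon K$ is a pure \emph{translate} of the fixed shape $\delta_\varepsilon K$: only the centres move, not the shape and not the integrand. Hence the blow-up around each site $\Psi(\varepsilon i)$ is literally identical to that in Theorem~\ref{thm1} (same $\sigma$, same constraint shape $K$, same constraint set $F_{\Psi(\varepsilon i)}$), and Lemmas~\ref{lemma: change bd} and \ref{lemma: uniform} apply with annuli and spheres now centred at $\Psi(\varepsilon i)$ rather than $\varepsilon i$ (the separation $|\Psi(\varepsilon i)-\Psi(\varepsilon j)|\gtrsim\varepsilon|i-j|$ is all that is needed). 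The factor $\rho(x)$ then appears only at the Riemann-sum stage: $\sum_i\varepsilon^d\varphi(\Psi(\varepsilon i),\bar v_i^\varepsilon)\approx\int_{\tilde\Omega}\varphi(\Psi(y),u(\Psi(y)))\,\d y=\int_\Omega\varphi(x,u(x))\,\rho(x)\,\d x$, which is exactly what the paper means by ``$\rho(x)$ is the asymptotic density of perforations at $x$''. Your route---global change of variables, then freezing of the induced $y$-dependent density and perforation shape, then the cell-level identity $\varphi^\Psi(y,z)=\varphi(\Psi(y),z)$---is correct (the detailed computation in your step~(v) is right; note however that your summary phrase ``times the Jacobian factor'' is misleading, since the Jacobian cancels at the cell level and only reappears in the final back-substitution $\d y=\rho(x)\,\d x$). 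What your approach buys is a mechanical reduction to the exact cubic-lattice setting of Theorem~\ref{thm1}; what the paper's approach buys is the complete avoidance of $x$-dependent integrands and deformed obstacle shapes, so that steps~(ii)--(iv) of your plan are simply not needed.
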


\begin{proof}[Proof of Theorem {\rm \ref{thm2}}]
The proof of Theorem \ref{thm2} is obtained following the one of Theorem \ref{thm1} by noting that after a change of variables the term $|{\rm det}(\nabla\Psi(\Psi^{-1}(x)))|^{-1}$ is (approximately) a prefactor of the contribution of the single
perforation at $x$. The construction of a recovery sequence is the same, upon noting that $\rho(x)$ so defined is the asymptotic density of perforations at $x$.
\end{proof}

Note that the hypotheses on the lattice can be localized by assuming that there exist disjoint open subsets $\Omega_j$ of $\Omega$ such that $|\Omega\setminus \bigcup_j\Omega_j|=0$, and $\Psi_j\colon\R^d\to\R^d$ smooth diffeomorphisms such that $\mathscr L_\varepsilon\cap \Omega_j=\Psi_j(\varepsilon\mathbb Z^d)\cap \Omega_j$.
The statement of the claim is the same with $\Psi=\Psi_j$ on $\Omega_j$.

\bigskip
We now take into account 
cases when the constraint can also be imposed on a lower-dimensional (possibly deformed) set. This situation models for example control devices on low-dimen\-sio\-nal manifolds. We only state the case of a $(d-1)$-dimensional set, parameterized by a smooth hypersurface $\Sigma\subset\Omega$ along which the sites of the perforation are approximately a $(d-1)$-dimensional cubic lattice of spacing $\varepsilon$. In this case, the size of the perforation is $\delta_\varepsilon=\varepsilon^{(d-1)/(d-p)}$. With this change in the exponent, the energy of a perforation at a point $x$ for an optimal approximation is $\varepsilon^{d-1}\varphi(x,u(x))$.  The sum of these terms produces a $(d-1)$-dimensional integral on $\Sigma$, and the relaxed constraint takes the form
$$
\int_\Sigma \varphi(x,u(x))\,\rho(x)\,\d \mathcal H^{d-1},
$$
where $\mathcal H^{d-1}$ is the $(d-1)$-dimensional Hausdorff measure (for a smooth surface, coinciding with the usual surface measure). Again, $\rho(x)$ can be interpreted as a $(d-1)$-dimensional density of perforations at $x$.

\begin{thm}[Limit analysis for perforations along an internal hypersurface]\label{thm3}
    Let $\delta_\varepsilon=\varepsilon^{\frac{d-1}{d-p}}$, let $\omega$ be an open bounded set in $\mathbb R^{d-1}$ and let $\Psi\colon\omega\to\Omega$ be a smooth diffeomorphism between $\omega$ and $\Psi(\omega)$. Set \begin{equation}
    K^\varepsilon_i=\Psi(\varepsilon i)+\delta_\varepsilon K,
        \end{equation}
for all $i\in\mathbb Z^{d-1}$ such that $K^\varepsilon_i\subset\Omega$, and let 
    \begin{equation}     \F_\varepsilon(u)\coloneqq\begin{cases}
            \displaystyle\int_\Omega \sigma(\nabla u)\d x \quad &\text{if } u(z)\in F_{\Psi(x_i^\varepsilon)}\text{ for all }i \hbox{ and } z\in{K^\varepsilon_i},\\
            +\infty & \text{otherwise}.
        \end{cases} 
    \end{equation}
    Then the $\Gamma$-limit of $\F_\varepsilon$ as $\varepsilon\to 0$ in the $L^p$ and weak $W^{1,p}$ topologies is
    \begin{equation}
    \F(u)\coloneqq \int_{\Omega} \sigma(\nabla u)\,\d x+\int_{\Psi(\omega)}\varphi(x, u(x))\rho(x)\,\d\mathcal{H}^{d-1},
    \end{equation}
with domain $W^{1,p}(\Omega;\mathbb R^m)$, where $\rho(x)=|{\rm det}(\nabla\Psi(\Psi^{-1}(x)))|^{-1}$.
The function $u$ on $\Psi(\omega)$ is meant in the sense of traces.
\end{thm}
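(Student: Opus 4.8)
The plan is to reproduce the two-sided $\Gamma$-convergence argument used for Theorem~\ref{thm1}, the only structural change being that the discrete sum of the single-perforation energies now accumulates on the $(d-1)$-dimensional surface $\Psi(\omega)$ rather than on a bulk set. The first step is the scaling bookkeeping that pins the exponent $\delta_\varepsilon=\varepsilon^{(d-1)/(d-p)}$. Since $\Psi$ is a diffeomorphism, the centres $\Psi(\varepsilon i)$, $i\in\mathbb Z^{d-1}$, are mutually $\sim\varepsilon$-separated, so for $\varepsilon$ small the balls $B(\Psi(\varepsilon i),\rho_\varepsilon)$ with $\delta_\varepsilon\ll\rho_\varepsilon\ll\varepsilon$ ($\rho_\varepsilon$ chosen as in Lemma~\ref{lemma: pw} for the present exponent) are pairwise disjoint and compactly contained in $\Omega$; the $O(\varepsilon^{-(d-2)})$ indices whose perforation comes within distance $O(\varepsilon)$ of $\partial\Omega$ or of the relative boundary of $\Psi(\omega)$ can be dropped without affecting the statement. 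The rescaling $z\mapsto\delta_\varepsilon^{-1}(z-\Psi(\varepsilon i))$ sends $B(\Psi(\varepsilon i),\rho_\varepsilon)$ onto $B(0,R_\varepsilon)$ with $R_\varepsilon=\rho_\varepsilon/\delta_\varepsilon\to+\infty$, the constraint on $\delta_\varepsilon K+\Psi(\varepsilon i)$ onto ``$v(z)\in F_{\Psi(\varepsilon i)}$ for $z\in K$'', and, because $\delta_\varepsilon^d=\varepsilon^{d-1}\delta_\varepsilon^p$, turns $\int_{B(\Psi(\varepsilon i),\rho_\varepsilon)}\sigma(\nabla u)\,\d z$ into $\varepsilon^{d-1}\int_{B(0,R_\varepsilon)}\sigma_\varepsilon(\nabla v)\,\d y$ with $\sigma_\varepsilon=\delta_\varepsilon^p\sigma(\delta_\varepsilon^{-1}\,\cdot\,)\to\sigma_0$; hence the contribution of the perforation at $\Psi(\varepsilon i)$ is to leading order $\varepsilon^{d-1}\varphi_{\varepsilon,R_\varepsilon}(\Psi(\varepsilon i),\cdot)$ with $\varphi_{\varepsilon,R}$ as in \eqref{eq: cap er}, and $\sum_i\varepsilon^{d-1}(\cdots)$ over an $\varepsilon$-fine lattice of $\mathbb R^{d-1}$ produces a $(d-1)$-dimensional integral.

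For the lower bound I would take $u_\varepsilon\rightharpoonup u$ in $W^{1,p}$, pass to a subsequence realizing the liminf, and apply, as in the proof of Theorem~\ref{thm1}, Lemma~\ref{lemma: truncation} and Lemma~\ref{lemma: change bd} to replace $u_\varepsilon$ by competitors $v_\varepsilon\rightharpoonup v$, with $v$ bounded and $\|v-u\|_{W^{1,p}}=o_{\eta,N}(1)$, that are constant, equal to $\bar v^\varepsilon_i$, on small concentric spheres around each $\Psi(\varepsilon i)$, at the cost of an $o_{\eta,N}(1)$ error in the energy. Bounding from below the energy on the complement of the perforation balls by weak lower semicontinuity of $\int\sigma$ and the energy inside the $i$-th ball by $\varepsilon^{d-1}\varphi_{\varepsilon,R_\varepsilon}(\Psi(\varepsilon i),\bar v^\varepsilon_i)$, the matter reduces to proving
\begin{equation*}
\liminf_{\varepsilon\to 0}\ \sum_i \varepsilon^{d-1}\,\varphi_{\varepsilon,R_\varepsilon}\big(\Psi(\varepsilon i),\bar v^\varepsilon_i\big)\ \ge\ \int_{\Psi(\omega)}\varphi\big(x,v(x)\big)\,\rho(x)\,\d\mathcal H^{d-1},
\end{equation*}
whose right-hand side is obtained from the left by first replacing $\varphi_{\varepsilon,R_\varepsilon}$ with $\varphi$ through Lemma~\ref{lemma: uniform} (the total correction is $o(1)$ because there are $O(\varepsilon^{-(d-1)})$ perforations), then reading the result as a Riemann sum over the $(d-1)$-cubes $Q^\varepsilon_i=\varepsilon i+\varepsilon(-\tfrac12,\tfrac12)^{d-1}\subset\omega$ of $t\mapsto\varphi(\Psi(t),g_\varepsilon(t))$ with $g_\varepsilon=\sum_i\mathbbm{1}_{Q^\varepsilon_i}\bar v^\varepsilon_i$, and finally invoking the area formula for the embedding $\Psi$, which turns $\d t$ into $\rho(x)\,\d\mathcal H^{d-1}$ on $\Psi(\omega)$ with $\rho(x)=|\det\nabla\Psi(\Psi^{-1}(x))|^{-1}$ interpreted as the reciprocal $(d-1)$-dimensional Jacobian. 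The upper bound is the mirror image: for bounded $u$ one applies Lemma~\ref{lemma: change bd} to $u_\varepsilon\equiv u$ to get $v_\varepsilon$ constant, equal to $\bar v^\varepsilon_i$, on balls $B^\varepsilon_i$ around $\Psi(\varepsilon i)$; into each such ball one glues the rescaled near-optimal competitor $z\mapsto\zeta^i_\varepsilon(\delta_\varepsilon^{-1}(z-\Psi(\varepsilon i)))$ for $\varphi_{\varepsilon,R_\varepsilon}(\Psi(\varepsilon i),\bar v^\varepsilon_i)$ from \eqref{eq: cap er}; equi-integrability of $|\nabla v_\varepsilon|^p$ gives $\int_{\Omega\setminus\bigcup B^\varepsilon_i}\sigma(\nabla v_\varepsilon)\to\int_\Omega\sigma(\nabla u)$, while $\sum_i\varepsilon^{d-1}\int_{B(0,R_\varepsilon)}\sigma_\varepsilon(\nabla\zeta^i_\varepsilon)$ is the same Riemann sum and converges to $\int_{\Psi(\omega)}\varphi(x,u(x))\rho(x)\,\d\mathcal H^{d-1}$ up to $C\eta$; letting $\eta\to0$ concludes.

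The step I expect to be the main obstacle is the one concealed in ``reading the result as a Riemann sum'': one must show that the piecewise-constant boundary data $g_\varepsilon$, transported to $\Psi(\omega)$ through $\Psi$, converge to the \emph{trace} of $v$ with respect to the $(d-1)$-dimensional Hausdorff measure, i.e.\ $g_\varepsilon\circ\Psi^{-1}\to v|_{\Psi(\omega)}$ in $L^p(\Psi(\omega);\mathcal H^{d-1})$. This has no counterpart in Theorems~\ref{thm1}--\ref{thm2}, where $L^p(\Omega)$-convergence of the discretised data sufficed; here it requires re-proving the codimension-one version of Lemma~\ref{lemma: change bd}, controlling the gap between the averages $\bar v^\varepsilon_i$ (taken over $d$-dimensional balls straddling the hypersurface) and the averages of the trace over the corresponding surface cells by a trace/Poincar\'e inequality on the balls $B(\Psi(\varepsilon i),\rho_\varepsilon)$ and summing these local errors against $\|v\|_{W^{1,p}}$. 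This is why the admissible window for $\rho_\varepsilon$ is tighter than in the bulk case (roughly, $\delta_\varepsilon\ll\rho_\varepsilon\ll\varepsilon^{(d-1)/d}$ instead of $\delta_\varepsilon\ll\rho_\varepsilon\ll\varepsilon$), and checking that such a window remains compatible with $R_\varepsilon\to+\infty$ and with the error bookkeeping coming from the quantitative recession bound \eqref{quantitative} is the single place where the codimension genuinely enters; everything else — the truncation, the uniform convergence $\varphi_{\varepsilon,R}\to\varphi$, the interior gluing, the treatment of boundary conditions — is formally identical to the proof of Theorem~\ref{thm1}, with $d$ replaced by $d-1$ in the counting of perforations. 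Finally, continuity of the trace operator $W^{1,p}(\Omega)\to L^p(\Psi(\omega);\mathcal H^{d-1})$ lets one pass from $v$ back to $u$ in the surface integral, and also accounts for the statement that $u$ on $\Psi(\omega)$ is to be read in the sense of traces.
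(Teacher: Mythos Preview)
Your proposal is correct and follows the same strategy as the paper, which simply states that the proof ``follows the ones of the previous two theorems, the only change being the observation that \ldots\ the scaling produces that each element of the perforation can be considered as a part of an approximation of a surface integral''. You have in fact gone further than the paper by explicitly isolating the one genuinely new ingredient---that the piecewise-constant data $\bar v^\varepsilon_i$ must converge to the \emph{trace} of $v$ on $\Psi(\omega)$ in $L^p(\mathcal H^{d-1})$, requiring a codimension-one analogue of Lemma~\ref{lemma: change bd}---which the paper leaves implicit.
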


\begin{proof}[Proof of Theorem {\rm \ref{thm3}}]
The proof follows the ones of the previous two theorems, the only change being the observation that, as noted above, the scaling produces that each element of the perforation can be considered as a part of an approximation of a surface integral. The construction of the recovery sequence is the same as before.
\end{proof}

Note that a particular case is when we take as $\omega$ a parameterization of a planar section of $\Omega$, which, up to a change in coordinates, we can suppose to be $\{x_d=0\}\cap\Omega$. After identifying $\{x_d=0\}$ with $\mathbb R^{d-1}$ and taking $\Psi$ the identity, the perforation is simply $K^\varepsilon_i=\varepsilon i+\delta_\varepsilon K$,  and the $\Gamma$-limit can be written as 
$$
\mathcal F(u)\coloneqq \int_{\Omega} \sigma(\nabla u)\,\d x+\int_{\{x_d=0\} \cap\Omega}\varphi(x, u(x))\,\d x,
$$
where the last integral is performed with respect to the Lebesgue measure in $\mathbb R^{d-1}$.

\bigskip
Finally, an interesting variant is when the constraint may be imposed on the boundary of $\Omega$ (or part of it), for which the relaxed term is 
$$
\int_{\partial\Omega} \overline\varphi(x,u(x))\,\d \mathcal H^{d-1}.
$$
The difference with the perforation in the bulk, is that perforations on the boundary only take into account the deformation in the interior, and hence, assuming the boundary smooth, the scaling analysis leading to $\varphi$ must be modified, obtaining a different energy density.

We can think of such constraints as loose confinement conditions imposing for example that the points on the boundary may only lay in the interior of a given set. We note that the computations for the function $\overline\varphi$ differ from those of the aforementioned $\varphi$ in that conditions on the boundary of $\Omega$ approximately only take into account the half-space delimited by the tangent plane. 

In the following results, values of functions on points in $\partial\Omega$ are understood in the sense of traces. We consider two cases: (a) bulk perforations centered in points on the boundary and (b) perforations as subsets of the boundary. The form of $\overline\varphi$ is different in the two cases (see \eqref{eq:fibar1} and \eqref{eq:fibar2}, respectively).

\begin{thm}[Limit analysis for perforations along the  boundary]\label{thm4}
Let $A_j$ be relatively disjoint open subsets of $\partial\Omega$ and $\omega_j$ open subsets of $\R^{d-1}$ such that there exist smooth onto diffeomorphisms $\Psi_j\colon\omega_j\to A_j$. Let $\delta_\varepsilon=\varepsilon^{(d-1)/(d-p)}$ and set  either

{\rm(a)}
$K^\varepsilon_{j,i}=\Psi_j(\varepsilon i)+\delta_\varepsilon B_1\cap\Omega$ (bulk perforations centered on the boundary), or

{\rm(b)}
$K^\varepsilon_{j,i}=\Psi_j(\varepsilon i+\delta_\varepsilon B_1)$ (perforations on the boundary).

\noindent Let
\begin{equation}     \F_\varepsilon(u)\coloneqq\begin{cases}
            \displaystyle\int_\Omega \sigma(\nabla u)\,\d x \quad &\text{if } u(z)\in F_{\Psi_j(x_i^\varepsilon)}\text{ for all }i, j \hbox{ and } z\in{K^\varepsilon_{j,i}},\\
            +\infty & \text{otherwise}.
        \end{cases} 
    \end{equation}
    Then the $\Gamma$-limit of $\F_\varepsilon$ as $\varepsilon\to 0$ in the $L^p$ and weak $W^{1,p}$ topologies is
    with $\F(u)$ given by
   \begin{equation}
       \F(u)\coloneqq \int_{\Omega} \sigma(\nabla u)\,\d x+\sum_j\int_{A_j}\overline\varphi(x, u(x))\,\rho(x)\,\d\mathcal{H}^{d-1},
    \end{equation}    
      with domain $W^{1,p}(\Omega;\mathbb R^m)$, 
    where $\rho(x)=|{\rm det}(\nabla\Psi_j(\Psi_j^{-1}(x)))|^{-1}$. In case {\rm(a)}, $\overline\varphi$ is defined by
\begin{equation}\label{eq:fibar1}
\overline\varphi(x,u)=\inf\bigg\{\int_{\mathbb R^d_+}\sigma_0(S_x\nabla \varv(z))\d z:  \varv-u\in W^{1,p}(\mathbb R^d_+;\mathbb R^m),  \varv\in F_x \hbox{ on }B_1\cap\{x_d>0\}\bigg\},
    \end{equation}
    where $R^d_+=\{x\in\mathbb R^d: x_d>0\}$ and $S_x$ is a rotation carrying the outer normal $\nu(x)$ to $\Omega$ in $x$ to the vector $(0,\ldots, 0,-1)$,
    while $\overline\varphi$ is defined by
\begin{equation}\label{eq:fibar2}
       \overline\varphi(x, u)=\inf\bigg\{\int_{\mathbb R^d_+}\sigma_0(S_x\nabla  \varv(z))\d z:  \varv-u\in W^{1,p}(\mathbb R^d_+;\mathbb R^m), \varv\in F_x \hbox{ on }B_1\cap\{x_d=0\} \bigg\}
           \end{equation}
            in case  {\rm(b)}.
\end{thm}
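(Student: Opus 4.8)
The plan is to follow the scheme already used for Theorems \ref{thm1}--\ref{thm3}; the only genuinely new feature is that the blow-up at a boundary point produces a Dirichlet-type minimum problem on a half-space rather than on all of $\mathbb R^d$, which is the origin of the modified density $\overline\varphi$ of \eqref{eq:fibar1}--\eqref{eq:fibar2}. First, exactly as in Theorem \ref{thm2}, I would use the diffeomorphisms $\Psi_j$ to localize on each boundary patch $A_j$ and, after a change of variables flattening $\partial\Omega$ near $A_j$, reduce to perforation sites $x_i^\varepsilon$ lying on a $(d-1)$-dimensional cubic lattice of spacing $\varepsilon$ contained in a hyperplane bounding $\Omega$; the Jacobian of the flattening then produces the prefactor $\rho(x)=|\det(\nabla\Psi_j(\Psi_j^{-1}(x)))|^{-1}$ and the correct asymptotic density of perforations, just as there. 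The formulation in terms of the deformation $y$, with the constraint read off $y$, follows via the substitution $u(x)=y(x)-x$ as after Theorem \ref{thm1}.

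Since $\delta_\varepsilon=\varepsilon^{(d-1)/(d-p)}\to0$, on each ball $B(x_i^\varepsilon,\delta_\varepsilon R)$ with $R$ fixed the set $\Omega$ differs from the tangent half-space $H_x=\{z:(z-x)\cdot\nu(x)<0\}$ by an error that vanishes, uniformly in $i$, after rescaling by $\delta_\varepsilon^{-1}$, while $\sigma_\varepsilon\to\sigma_0$ by the assumptions on $\sigma$. Applying the rotation $S_x$ carrying $\nu(x)$ to $-e_d$ turns $H_x$ into $\mathbb R^d_+$ and, because $\sigma_0$ need not be invariant under rotations of the reference variable, replaces $\sigma_0(\nabla\varv)$ by $\sigma_0(S_x\nabla\varv)$; this is precisely the half-space minimum problem defining $\overline\varphi$, with the constraint on $B_1\cap\{x_d>0\}$ in case {\rm(a)} and on $B_1\cap\{x_d=0\}$ in case {\rm(b)}. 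I would then introduce the half-space analogue $\overline\varphi_{\varepsilon,R}$ of \eqref{eq: cap er} — the same infimum taken on $B(0,R)\cap\mathbb R^d_+$ with the zero datum imposed only on $\partial B(0,R)\cap\mathbb R^d_+$ — and prove, by the argument of Lemma \ref{lemma: uniform}, that $\overline\varphi_{\varepsilon,R}\to\overline\varphi$ uniformly on $\overline\Omega\times B(0,L)$.

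With these ingredients the two halves of the $\Gamma$-limit go through essentially verbatim as in Theorem \ref{thm1}, balls being replaced by half-balls $B^\varepsilon_{i,j}\subset\Omega$ centred on the flattened boundary. For the lower bound I would, given $u_\varepsilon\rightharpoonup u$, truncate by Lemma \ref{lemma: truncation}, use the half-space version of Lemma \ref{lemma: change bd} to make the sequence constant on the half-spheres $\partial B^\varepsilon_{i,j}\cap\Omega$ with the constants converging in $L^p$ to the trace $u|_{A_j}$, localize the energy on the half-balls, bound each contribution from below by $\varepsilon^{d-1}\overline\varphi_{\varepsilon,R}(x_i^\varepsilon,\bar\varv_i^\varepsilon)$ by minimality, and pass to the limit using the uniform convergence just mentioned together with the convergence of the piecewise-constant interpolants; since $\varepsilon^{d-1}$ times a sum over a $(d-1)$-dimensional $\varepsilon$-lattice converges to a $(d-1)$-dimensional integral, the perforation contributions converge to $\sum_j\int_{A_j}\overline\varphi(x,u(x))\rho(x)\,\d\mathcal H^{d-1}$, while the bulk term $\int_\Omega\sigma(\nabla u)\,\d x$ comes from the energy outside the half-balls and weak lower semicontinuity. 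For the upper bound, again mimicking Theorem \ref{thm1}, I would make $u$ constant on the half-spheres via Lemma \ref{lemma: change bd}, glue inside each $B^\varepsilon_{i,j}$ the $\delta_\varepsilon$-rescaling of a near-optimal competitor for $\overline\varphi_{\varepsilon,R}(x_i^\varepsilon,\bar\varv_i^\varepsilon)$ that attains the prescribed boundary value on $\partial B^\varepsilon_{i,j}$ and satisfies the constraint, and estimate the total energy from above by $\int_\Omega\sigma(\nabla u)\,\d x+\sum_{i,j}\varepsilon^{d-1}\overline\varphi_{\varepsilon,R}(x_i^\varepsilon,\bar\varv_i^\varepsilon)+o(1)$, which converges to $\F(u)$ up to an arbitrarily small error; merely measurable distributions of perforations are dealt with as indicated after Theorem \ref{thm2}.

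The hardest point is the boundary geometry: one has to verify that at the blow-up scale $\delta_\varepsilon$ the curved, flattened boundary may be replaced by the tangent half-space, after rotation by $S_x$, with an error that is uniformly $o(1)$ after rescaling, and — most delicate — that the half-sphere cut-off of Lemma \ref{lemma: change bd} still produces constants converging to the \emph{trace} of $u$ on $\partial\Omega$ rather than to an interior average; once this is secured, the only difference between sub-cases {\rm(a)} and {\rm(b)} is whether the relaxed constraint in the half-space problem sits on the thick slice $B_1\cap\{x_d>0\}$ or on the thin slice $B_1\cap\{x_d=0\}$.
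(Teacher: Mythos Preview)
Your proposal is correct and follows essentially the same approach as the paper: the paper's own proof of Theorem~\ref{thm4} is in fact a one-sentence sketch stating only that ``the blow-up argument close to the boundary produces limit minimum problem defined on half-spaces,'' with everything else implicitly inherited from Theorems~\ref{thm1}--\ref{thm3}. Your outline spells out precisely this scheme---flattening via $\Psi_j$, the half-space analogue of the capacitary problem, the rotation $S_x$, the half-ball version of Lemma~\ref{lemma: change bd}, and the trace issue---in considerably more detail than the paper provides, so there is nothing to correct.
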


Note that if $\sigma_0$ is a convex, reflection-invariant function then $\overline\varphi(x, z)$ satisfies 
\begin{equation}
    \overline\varphi(x,u)=\frac{1}{2}\varphi(x, u)=\frac{1}{2}\inf\bigg\{\int_{\mathbb R^d}\sigma_0(\nabla \varv(z))\,\d z: \varv-u\in W^{1,p}(\mathbb R^d;\mathbb R^m), \varv\in F_x \hbox{ on }B_1\bigg\}
\end{equation}
in case (a) and analogously in case (b).

\begin{proof}[Proof of Theorem {\rm \ref{thm4}}] The proof differs from the one of the previous theorem in that the blow-up argument close to the boundary produces limit minimum problem defined on half-spaces.
\end{proof}

\medskip

As examples of relevant constraints on the boundary we can consider conditions related to the normal direction. For instance, $F_x=\{t\nu_x:t\geq 0\}$ where $\nu_x$ is the outward normal to $\partial \Omega$. This is a sort of infinitesimal constraint, allowing energy-free displacement only in the normal direction to $\Omega$. Another example is that in which $F_x$ is the cone $\{z\in\mathbb R^{d}:\langle\nu_x, z\rangle \geq \kappa|z|\}$. In particular, for $\kappa=0$ this allows for
energy-free displacements if they are not towards $\Omega$.
In both cases, $F_x$ are cones and, if $\sigma(\xi)=\|\xi\|^p$, then the term ${\rm dist}^p(z, F_x)$ in $\overline\varphi$ can be interpreted as a power of the norm of the projection on the dual cone.

\bigskip\noindent
{\bf The critical case $p=d$.} The case $p=d$ cannot directly be treated as when $p<d$.
The reason is that capacitary problems for positively $d$-homogeneous functions are invariant by scaling. As a consequence, the scaling parameter $\delta=\varepsilon^{d/d-p}$ is meaningless and cannot be used. At this scale the perforation size does not scale as a power of the period, but is exponentially small, and we have to take $\delta= e^{-\kappa/\varepsilon^{d/(d-1)}}$. In accord with the case when $F_x=\{0\}$ (homogeneous Dirichlet conditions), which has been treated in \cite{sig}, in Theorem \ref{thm1} the corresponding $\varphi=\kappa^{1-d}\varphi_1$ is described by the asymptotic formula
$$
\varphi_1(x,u)= \lim_{T\to+\infty}|\log T|^{d-1}
\min\bigg\{\int_{B(0,T)} \sigma_0(\nabla \varv(z))\,\d z: \varv-u\in W^{1,p}_0(B(0,T);\mathbb R^m),\ \varv\in F_x\hbox{ on }B(0,1)
\bigg\}.
$$
Note that the form of $\varphi$ does not depend on $K$, but it depends on the exponential decay through the factor $\kappa^{1-d}$.
As in the case of Dirichlet conditions we have a
discontinuous dependence of $\varphi$ on $p$ at the critical scaling, which suggests a fine  dependence of the perforation on $p$ close to $d$ at small but finite $\varepsilon$ in the spirit of {\em expansions by $\Gamma$-convergence} \cite{Br-Sig,BT}.

\section{Numerical examples}\label{sec:num_res}

In this section, we illustrate the application of the mathematical results detailed above through numerical simulations. We focus on problems such that the reference configuration $\Omega$ is a subset of $\R^2$ that is mapped by the deformation to either $\R^2$ or $\R^3$. This context is of particular relevance for elastic membranes subjected to perforations at the boundary or in the interior. Specifically, we take the square domain $ (0,\ell)\times(0,\ell)$ of side length $\ell$ as the reference set  and we assume that the hyperelastic energy density is of the form $\sigma(\xi) = \|\xi\|^p$, with $p \in (1,2)$.
\begin{figure}[t]
    \centering
\includegraphics[width=1.0\linewidth]{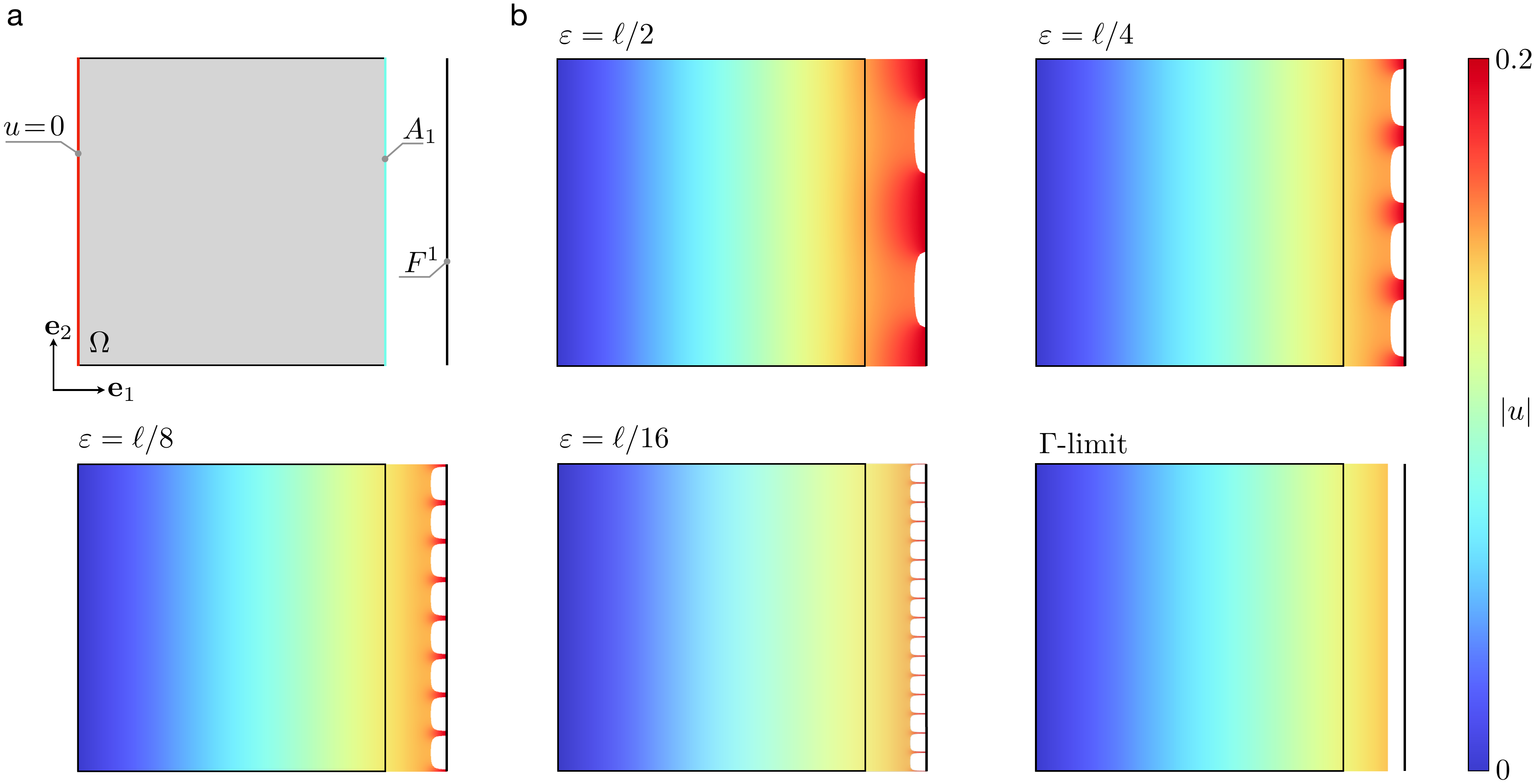}
    \caption{Numerical results for the case of a square domain $\Omega=(0, \ell)\times (0, \ell)$ of side length $\ell$ subjected to Dirichlet boundary conditions on $\{0\}\times (0, \ell)$ and to the constraint $F^1_{(x_1, x_2)}=\{(z_1, z_2): z_1 = x_{1,\textrm{c}} - x_1\}$ at the perforation $A_1=\{\ell\}\times (0, \ell)$. (a) Sketch of the boundary-value problem. (b) Deformed configurations for perforations with $\varepsilon = \{\ell/2, \ell/4, \ell/8, \ell/16\}$ and for the limit functional. The colorbar reports the norm of the displacement field. For all the results, $p = 3/2$, $\ell = 1$, and $x_{1,\textrm{c}} = 6/5$.}
    \label{fig:ex_num_1}
\end{figure}
We next consider distinct boundary-value problems as follows:

\begin{enumerate}
    \item The reference set $\Omega$ is subjected to null displacement on $\{0\}\times(0, \ell)$ and to the boundary constraint $F^1_{(x_1, x_2)}=\{(z_1, z_2): z_1=x_{1,\textrm{c}}-x_1\}$ on the perforation $A_1=\{\ell\}\times (0, \ell)$, see Fig.~\ref{fig:ex_num_1}a. Specifically, the perforation is constrained on the vertical line passing through $x_1 = x_{1, \textrm{c}}$ and
    $$\overline{\varphi}((x_1, x_2), (z_1, z_2))=c \,|z_1 + x_1 - x_{1,\textrm{c}}|^p ,$$
    according to Theorem~\ref{thm4}.
    
    \item The reference set $\Omega$ is subjected to the boundary constraint $F^2_{(x_1, x_2)}=\{(z_1, z_2): (z_1 + x_1 - x_{1,\textrm{c}})^2+(z_2 + x_2 - x_{2,\textrm{c}})^2=R^2\}$ on the perforations $A_1=\{\ell\}\times(0, \ell)$, $A_2=(0, \ell)\times \{\ell\}$, $A_3=\{0\}\times (0, \ell)$, and $A_4=(0, \ell)\times \{0\}$, see Fig.~\ref{fig:ex_num_2}a. Hence, the perforations are constrained on the circle of center $(x_{1,\textrm{c}}, x_{2,\textrm{c}})$ and of radius $R$, so that $$\overline{\varphi}((x_1, x_2), (z_1, z_2))=c \,\Big|\sqrt{(z_1 + x_1 -x_{1,\textrm{c}})^2+(z_2 + x_2 - x_{2,\textrm{c}})^2}-R\Big|^p ,$$ according to Theorem~\ref{thm4}.

    \item The reference set $\Omega$ is subjected to null displacement on $(0,\ell)\times\{0\}$ and on $(0,\ell)\times\{\ell\}$, and to the constraint $F^3_{(x_1, x_2)}=\{(z_1, z_2, z_3): (z_2 + x_2 - x_{2,\textrm{a}})^2 + (z_3)^2 = R^2\}$ on the one-dimensional perforations $I_1 = \{\ell/10\}\times (\ell/10,9\ell/10)$, $I_2 = \{\ell/2\}\times (\ell/10,9\ell/10)$, and $I_3 = \{9\ell/10\}\times (\ell/10,9\ell/10)$ in the interior of $\Omega$, see Fig.~\ref{fig:ex_num_3}a. Thus, the perforations are constrained on the cylinder of radius $R$ and axis the line $x_2 = x_{2,\textrm{a}}$, so that $$\varphi((x_1, x_2), (z_1, z_2, z_3))= c \,\Big|\sqrt{(z_2 + x_2 - x_{2,\textrm{a}})^2 + (z_3)^2}-R\Big|^p ,$$ according to Theorem~\ref{thm3}.

    \item The reference set $\Omega$ is subjected to null displacement on $\{0\}\times(0, \ell)$ and to the boundary constraint $F^4_{(x_1, x_2)}=\{(z_1, z_2): z_2 + x_2 + z_1 + x_1 - 3\ell/2 \ge 0\}$ on the perforation $A_1=\{\ell\}\times (0, \ell)$, see Fig.~\ref{fig:ex_num_4}a. Specifically, the perforation is unilaterally constrained to the right of the line $x_2 + x_1 - 3\ell/2 = 0$, so that denoting by $H$ the unit step function $$\overline{\varphi}((x_1, x_2), (z_1, z_2))=c
    \,H\big(\tfrac32\ell - z_2 - x_2 - z_1 - x_1\big) \,\big|z_2 + x_2 + z_1 + x_1 - \tfrac32\ell\big|^p 2^{-\frac{p}2} ,$$ according to Theorem~\ref{thm4}.
    
\end{enumerate}

We remark that, for the boundary-value problems outlined above, the densities $\varphi$ and $\overline{\varphi}$ appearing in the limit functional measure the distance of the perforations from the target sets.

In the following, we assume the scaling $\delta_\varepsilon=\varepsilon^{\frac{1}{2-p}}$ and compare the numerical results obtained for perforations of finite size with those corresponding to the $\Gamma$-limit.
For the case of perforations characterized by finite size, the weak formulation of the governing equations is achieved by computing the functional derivative of the Lagrangian consisting in the hyperelastic strain energy augmented by the constraints  imposed on the perforations. Likewise, solutions to the respective limit cases are achieved through the weak formulation of the functionals presented in Section~\ref{sec:mat_results}.

\begin{figure}[t]
    \centering
\includegraphics[width=1.0\linewidth]{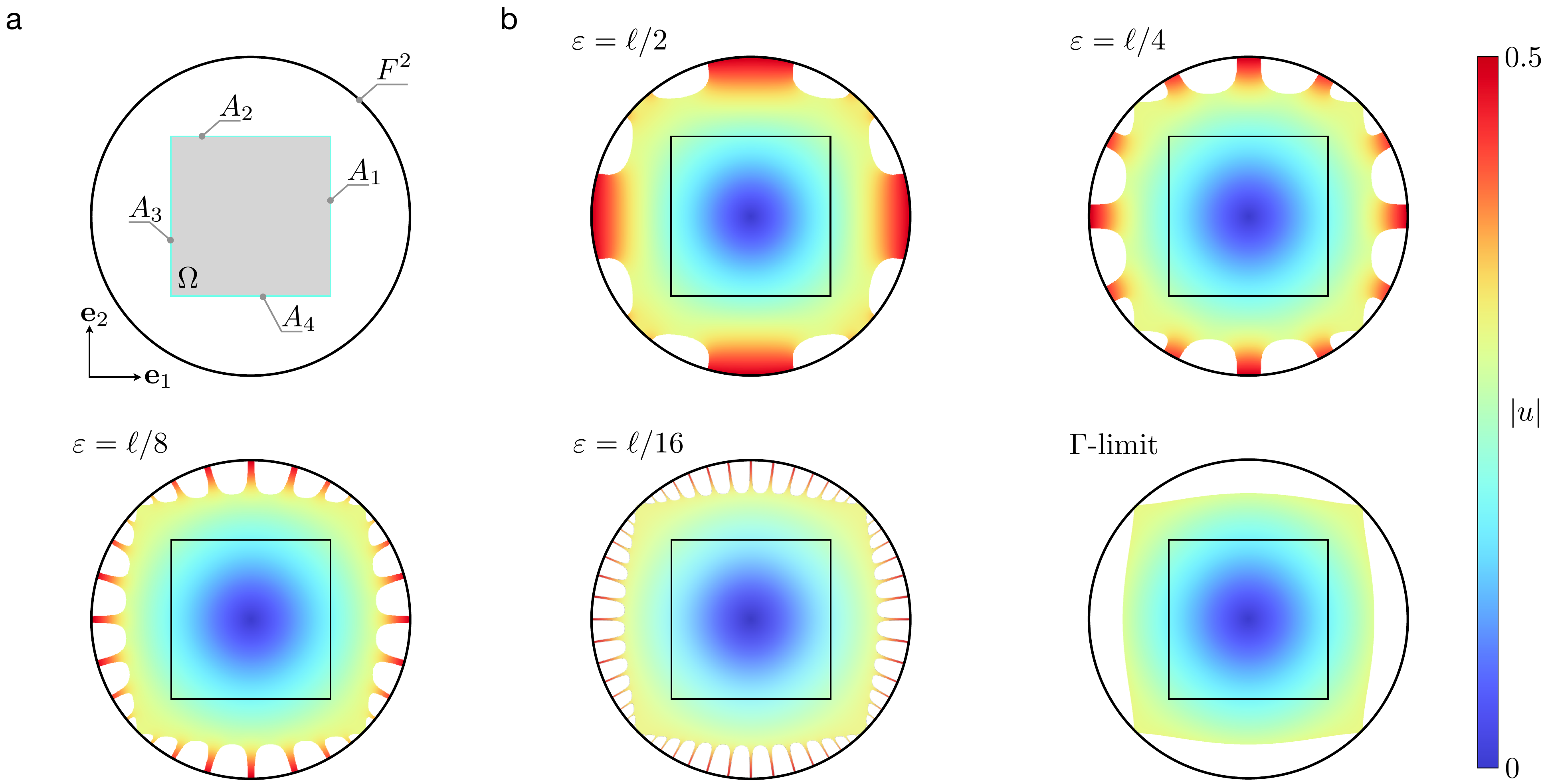}
    \caption{Numerical results for the case of a square domain $\Omega=(0, \ell)\times 0, \ell)$ of side length $\ell$ subjected to the constraint $F^2_{(x_1, x_2)}=\{(z_1, z_2): (z_1 + x_1 - x_{1,\textrm{c}})^2 + (z_2 + x_2 - x_{2,\textrm{c}})^2 = R^2\}$ at the perforations $A_1=\{\ell\}\times (0, \ell)$, $A_2 = (0, \ell)\times \{\ell\}$, $A_3=\{0\}\times (0, \ell)$, and $A_4=\{0\}\times(0, \ell)$. (a) Sketch of the boundary-value problem. (b) Deformed configurations for perforations with $\varepsilon = \{\ell/2, \ell/4, \ell/8, \ell/16\}$ and for the limit functional. The colorbar reports the norm of the displacement field. For all the results, $p = 3/2$, $\ell = 1$, $x_{1,\textrm{c}} = x_{2,\textrm{c}} = 1/2$, and $R = 1$.}
    \label{fig:ex_num_2}
\end{figure}

In both cases, numerical solutions to the boundary-value problems are computed through the finite element discretization of the governing equations. For this purpose, we exploit the functionalities of the software COMSOL Multiphysics~6.1. In particular, we discretize the displacement field by means of a suitable mesh of order-one Lagrange elements and seek numerical solutions via the Newton's solver.

In passing, we remark that an explicit formula for the $p$-capacity, $c$, appearing in the limit functional is not available for the case of boundary perforations in the form of segments of length $\delta$. Thus, its value has been numerically computed by independent finite element simulations, obtaining $c \approx 1.602$ for the value of $p = 3/2$. As later recalled, this is the choice for the numerical simulations concerning the first, the second, and the fourth problem. On the contrary, $c = \pi [2(2-p)/(p-1)]^{p-1}$ for the case of perforations in the interior of the reference set in the form of balls of diameter $\delta$, the form relevant to the third problem, see for instance \cite{Ma11}.

The numerical results for the first boundary-value problem are summarized in Fig.~\ref{fig:ex_num_1} as obtained for $p=3/2$, $\ell=1$, and $x_{1,\textrm{c}}=6/5$. 
Specifically, the problem is sketched in Fig.~\ref{fig:ex_num_1}a, whereas Fig.~\ref{fig:ex_num_1}b reports about the solutions obtained for $\varepsilon = \{1/2,1/4,1/8,1/16\}$ and for $\varepsilon \to 0$.
We observe that, as $\varepsilon$ decreases, the displacement field progressively approaches the $\Gamma$-limit, modulo on the discrete perforations where the constraint is enforced. This behaviour is expected in light of the mathematical result of Theorem~\ref{thm4}. Note that the $\Gamma$-limit corresponds to a homogeneous deformation, for the specific choice of strain energy density, and is such that the right vertical boundary of the reference set does not match the constraint of $F^1$. This is a characteristic feature of how the limit functional captures the averaged effect of the constraint on the perforation, as highlighted by the comparison with the numerical results for $\varepsilon = \{1/2, 1/4, 1/8,1/16\}$.

\begin{figure}[t]
    \centering
\includegraphics[width=1.0\linewidth]{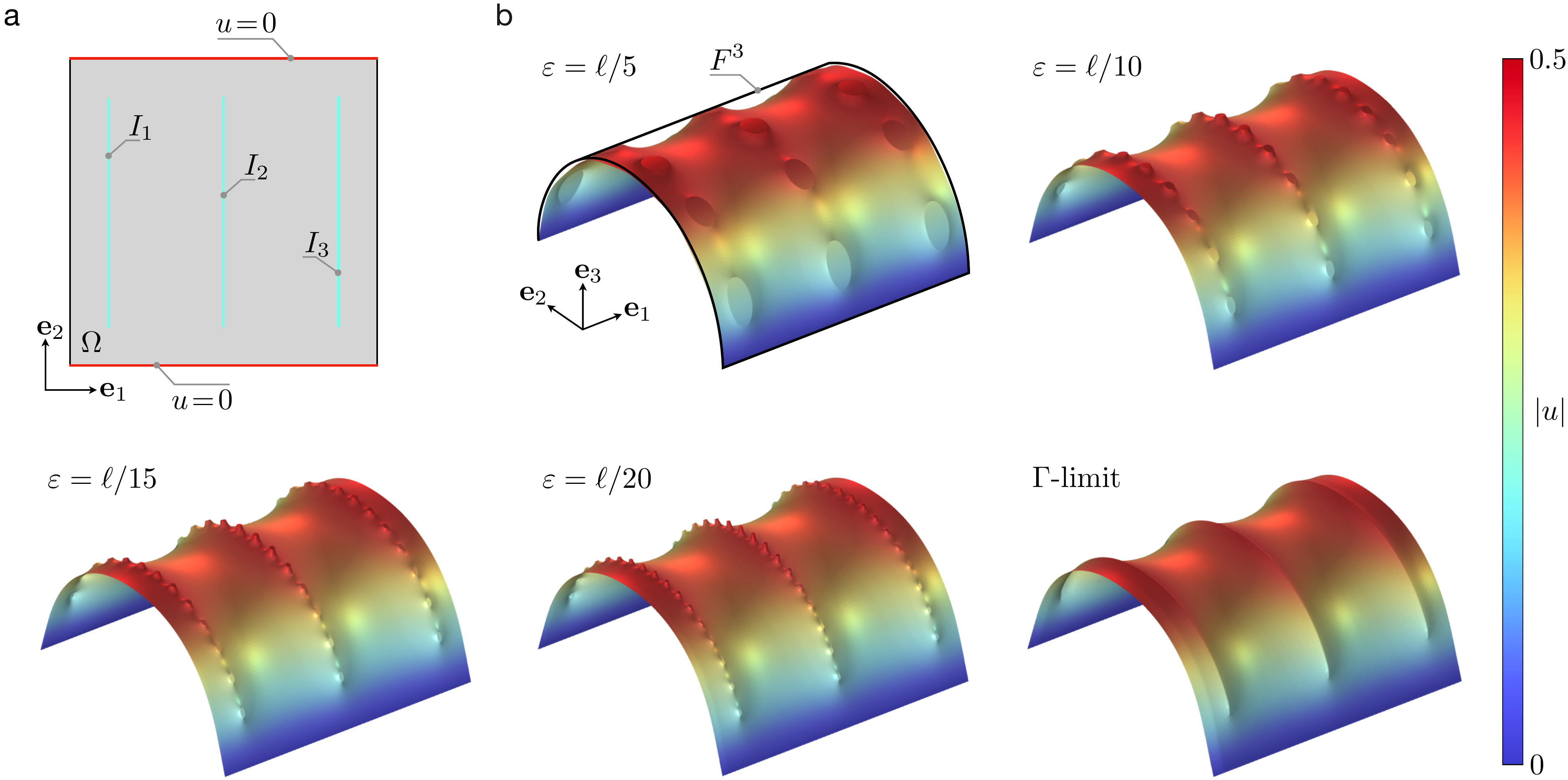}
    \caption{Numerical results for the case of a square domain $\Omega=(0, \ell)\times 0, \ell)$ of side length $\ell$ subjected to null displacement on $(0,\ell)\times\{0\}$ and on $(0,\ell)\times\{\ell\}$, and to the constraint $F^3_{(x_1, x_2)}=\{(z_1, z_2, z_3): (z_2 + x_2 - x_{2,\textrm{a}})^2 + (z_3)^2 = R^2\}$ on the one-dimensional perforations $I_1 = \{\ell/10\}\times (\ell/10,9\ell/10)$, $I_2 = \{\ell/2\}\times (\ell/10,9\ell/10)$, and $I_3 = \{9\ell/10\}\times (\ell/10,9\ell/10)$. (a) Sketch of the boundary-value problem. (b) Deformed configurations for perforations with $\varepsilon = \{\ell/5, \ell/10, \ell/15, \ell/20\}$ and for the limit functional. The colorbar reports the norm of the displacement field. For all the results, $p = 4/3$, $\ell = 1$, $x_{2,\textrm{a}} = 1/2$, and $R = 1/2$.}
    \label{fig:ex_num_3}
\end{figure}

\begin{figure}[ht]
    \centering
\includegraphics[width=1.0\linewidth]{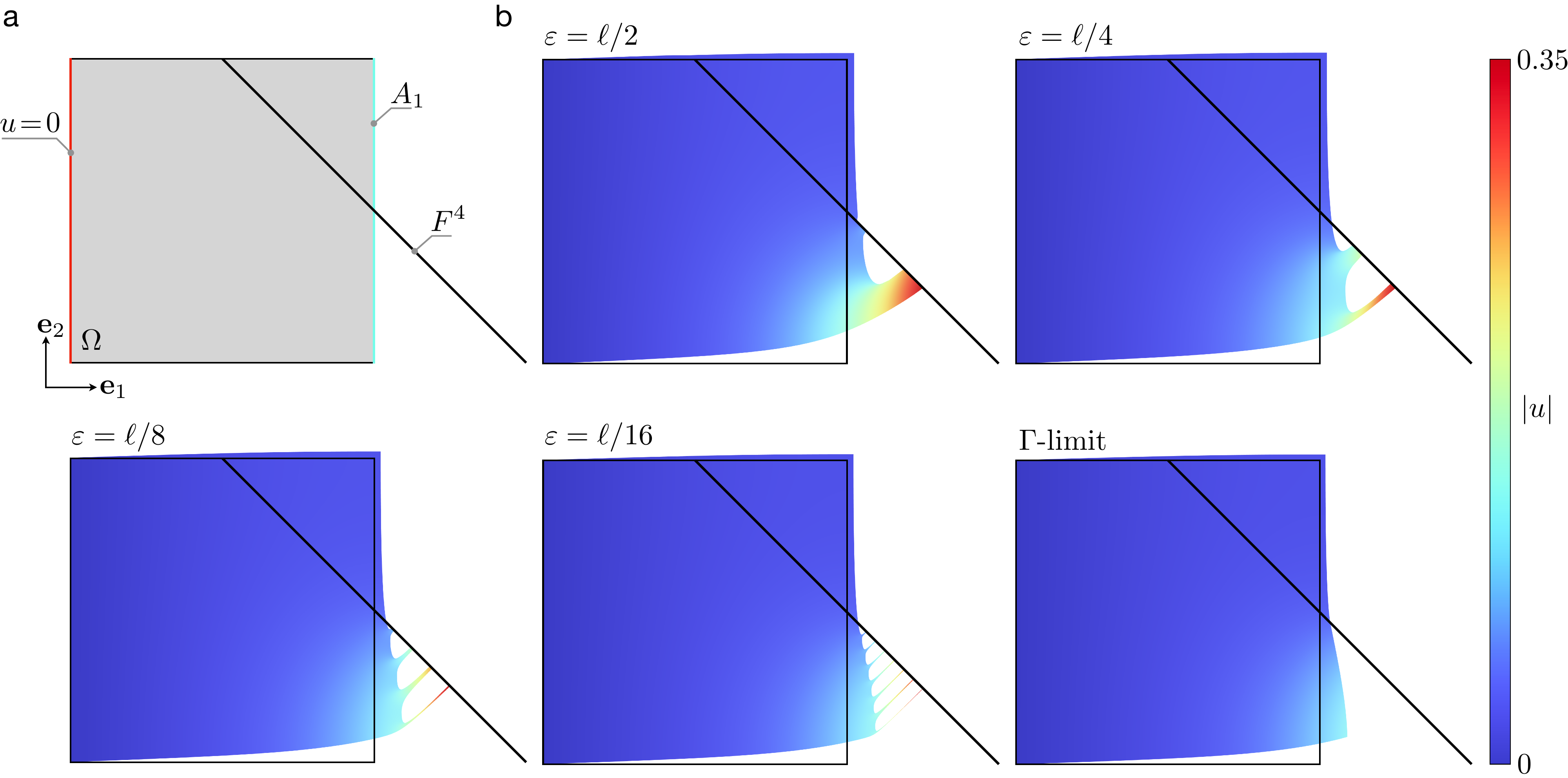}
    \caption{Numerical results for the case of a square domain $\Omega=(0, \ell)\times (0, \ell)$ of side length $\ell$ subjected to Dirichlet boundary conditions on $\{0\}\times (0, \ell)$ and to the constraint $F^4_{(x_1, x_2)}=\{(z_1, z_2): z_2 + x_2 + z_1 + x_1 - 3\ell/2 \ge 0\}$ on the perforation $A_1=\{\ell\}\times (0, \ell)$. (a) Sketch of the boundary-value problem. (b) Deformed configurations for perforations with $\varepsilon = \{\ell/2, \ell/4, \ell/8, \ell/16\}$ and for the limit functional. The colorbar reports the norm of the displacement field. For all the results, $p = 3/2$ and $\ell = 1$.}
    \label{fig:ex_num_4}
\end{figure}

We next report in Fig.~\ref{fig:ex_num_2} the numerical results for the second boundary-value problem as obtained for $p=3/2$, $\ell = 1$, $x_{1,\textrm{c}} = x_{2,\textrm{c}} = 1/2$, and $R = 1$. We recall that, in this case, perforations at the boundary of the reference set are constrained to lay on a circle of radius $R$ and center $(x_{1,\textrm{c}},x_{2,\textrm{c}})$, see Fig.~\ref{fig:ex_num_2}a. As for the previous case, the displacement field approaches the $\Gamma$-limit as the perforations' density increases. This is evident from the deformed shapes reported in Fig.~\ref{fig:ex_num_2}b. In this case the deformation is not homogeneous, but we further remark that in the $\Gamma$-limit the boundary of the reference set does not correspond to the constraint of $F^2$, as commented for the previous test case. 

Clearly, the two problems discussed above pertain the plane deformation of the reference configuration as due to constraints at its boundary. Hence, we turn our attention to the third boundary-value problem, such that one-dimensional perforations in the interior of the reference set determine its deformation in space, see Fig.~\ref{fig:ex_num_3}. Indeed, we recall that such perforations are constrained to lay on the cylinder of radius $R$ and axis the line of $x_2 = x_{2,\textrm{c}}$. The problem is sketched in Fig.~\ref{fig:ex_num_3}a and the relevant numerical results are shown in Fig.~\ref{fig:ex_num_3}b as obtained for $p = 4/3$, $\ell = 1$, $x_{2,\textrm{a}} = 1/2$, and $R = 1/2$. Specifically, the figure shows the results for $\varepsilon = \{1/5, 1/10, 1/15, 1/20\}$, to be compared with the limit case of $\varepsilon \to 0$. Also in this case, the $\Gamma$-limit well captures the effect of perforations of finite size as the parameter $\varepsilon$ decreases.

To conclude this section, we consider an interesting variation of the first boundary-value problem to further highlight the possibility to account for general constraints offered by the presented framework. In particular, we now assume that the boundary perforation is unilaterally constrained under tension to the line $x_2 + x_1 - 3\ell/2 = 0$, see Fig.~\ref{fig:ex_num_4}a. The numerical results for this test case are reported in Fig.~\ref{fig:ex_num_4}b as obtained for $\varepsilon = \{1/2, 1/4, 1/8, 1/16\}$ and for $p=3/2$ and $\ell = 1$. As for the previous cases, the $\Gamma$-limit well captures the averaged effect of the perforation as $\varepsilon$ decreases. To obtain the numerical results corresponding to the limit functional, the unit step function in the expression for $\overline{\varphi}$ has been approximated as $H(\tau) \approx (1 + \tanh(k\,\tau))/2$, with $k = 10^4$.

The result we have presented concern simple test cases and are limited to a specific choice of the strain energy density. Nevertheless, they provide evidence about the applicability of the proposed approach to the study of membranes and tensile structures subjected to diffuse constraints and allow to capture the average effect of such constraints on the equilibrium configuration of those structures.

\section{Conclusions and perspectives}

In this study, we have explored the behaviour of hyperelastic bodies, with energy function $\sigma$, subjected to kinematic constraints in many small regions. We have expressed our analysis through the computation of a $\Gamma$-limit, highlighting the overall effect of the perforation by the appearance of an extra integral term of a function $\varphi(x,u)$ depending on the $p$-recession function of $\sigma$, on the constraint at $x$, and on the value of the displacement $u$. The results that we obtain significantly improve previous asymptotic analyses, where mainly only a homogeneous Dirichlet  condition had been addressed in a scalar or convex case.
The numerical results, even though computed in simple situations, nicely show the homogenization effect of the perforation and how the $\Gamma$-limit problem simplifies the computations, still capturing the limit details of minimizers.

\begin{figure}[ht]
    \centering
\includegraphics[width=1.0\linewidth]{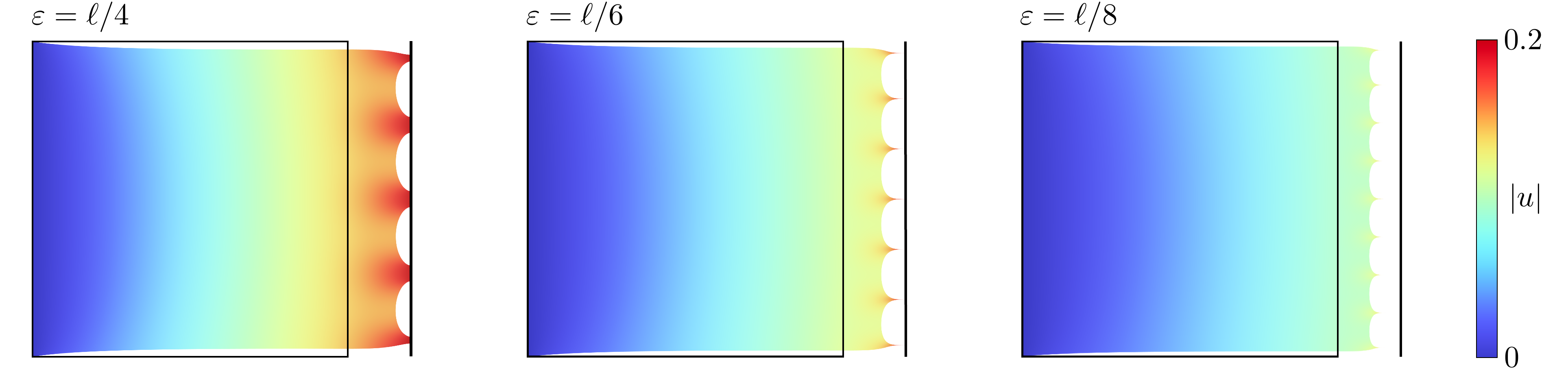}
    \caption{The first boundary-value problem is reexamined by assuming the compressible neo-Hookean material model. Deformed configurations for perforations with $\varepsilon = \{\ell/4, \ell/6, \ell/8\}$. The colorbar reports the norm of the displacement field. For all the results, $\mu = \lambda = 1$ and $\ell = 1$.}
    \label{fig:ex_num_5}
\end{figure}

In order to obtain our results, we have required some polynomial growth conditions on $\sigma$ that are commonly used in asymptotic problems for hyperelastic materials (see e.g.~\cite{BrDe98, Muller}) and that guarantee a good definition of $\varphi$. It is interesting to note, however, that the integrals appearing in the $\Gamma$-limit, both the elastic energy and the integral involving $\varphi$, are well defined and give a lower-semicontinuous functional, even if we require a polyconvex growth condition (see \cite{Ball,BrDe98}) and the existence of some $p$-recession function for $\sigma$. It remains an open problem whether such candidate limit energy is still a good approximation for this asymptotic problem in the polyconvex case. In this direction, we have performed a 
test resembling the first boundary-value problem of Section~\ref{sec:num_res}, but now assuming a compressible neo-Hookean material model. The strain energy density now takes the form
$$\sigma(\nabla y) = \frac{\mu}{2} (\|\nabla y\|^2 - 2 - \log J) + \frac{\lambda}{2}(\log J)^2 ,$$
where $J = \det(\nabla y)$ is assumed to be positive, while $\mu$ and $\lambda$ are material constants. In this case the energy does not satisfy a polynomial growth conditions due to the condition of $J>0$. However, we can conjecture that 
we can proceed as in the case
$d=p=2$, taking the observations at the end of Section \ref{sec:mat_results} into the account. In support of this, we report in Fig.~\ref{fig:ex_num_5} the results of numerical experiments carried out for $\varepsilon = \{\ell/4, \ell/6, \ell/8\}$ and assuming $\mu = \lambda = 1$ for the material constants. In setting the numerical simulations, we assumed the scaling of $\delta_\varepsilon = \exp(-\kappa/\varepsilon^2)$ with $\kappa = 1.9$.

We finally note that, for what regards the form of the constraints examined in this work, we have moved from a (relatively) simple condition modelled on $u=0$ or $m=1$ and $u\ge 0$ to an inclusion $u\in F_x$, with a large freedom on the closed set $F_x$. In order to pass to the limit, we have required some uniform continuity on the dependence $x\mapsto F_x$. We think that this is a technical assumptions, which can be relaxed in many ways. This will be useful in order to treat cases when the choice of the local form of $F_x$ may vary greatly, or is chosen randomly from a set of constraints, and we have a superposed homogenization effect.

\section*{Acknowledgements}

This work was supported by the Italian Ministry of University and Research (MUR) through the grants `Dipartimenti di Eccellenza 2023-2027 (Mathematics Area)' and PRIN 2022 n.\,2022NNTZNM `DISCOVER'. 
A.B. and S.V. are members of the `Gruppo Nazionale per l’Analisi Matematica, la Probabilità e le loro Applicazioni' and G.N. of the `Gruppo Nazionale di Fisica Matematica' of the `Istituto Nazionale di Alta Matematica' (INdAM).

\appendix
\renewcommand\appendixname{\!}

\section{Appendix: technical results}\label{app:lemmas}

    
In this Appendix we gather the  technical lemmas used in the convergence theorems, with a short proof indicating the main new arguments with respect to the known results.
\smallskip

The following result is a `truncation lemma', stating that, up to an arbitrary small error, the energies of a weakly converging sequence in $W^{1,p}$ are concentrated on sets where this sequence is equibounded. A precise statement is obtained by producing an equibounded sequence with almost the same limit and almost the same energy. This sequence is obtained from the original one by composition of bounded Lipschitz functions (note that a simple component-wise truncation may indeed increase the energies).

\begin{lm}\label{lemma: truncation} Let $\F_\varepsilon$ be as in Theorem {\rm\ref{thm1}}.
    Let $u_\varepsilon$ weakly converge to $u$   in $W^{1, p}(\Omega;\mathbb R^m)$. 
    Then for all $\eta>0$ there exist $R_\eta>0$, a subsequence $u_{\varepsilon_h}$ and a sequence $v_h\in W^{1, p}$ such that $\|v_h\|_{L^\infty}\leq R_\eta$,  $\|u_{\varepsilon_h}-v_h\|_{L^p}=o_\eta(1)$ as $\eta\to 0$, and $
     \liminf\limits_{\varepsilon\to 0} \mathcal F_\varepsilon(u_\varepsilon)\geq \liminf\limits_{h\to+\infty} \mathcal F_{\varepsilon_h}(v_h)-\eta$.

\end{lm}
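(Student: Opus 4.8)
The plan is to build the truncation via a standard level-set argument, but using the Lipschitz functions $\psi_j$ provided by Condition~2 on the constraint sets rather than a naive componentwise truncation, so that the truncated sequence still satisfies the perforation constraint $v_h(x)\in F_{x_i^\varepsilon}$ and hence $\F_{\varepsilon_h}(v_h)$ is finite whenever $\F_{\varepsilon_h}(u_{\varepsilon_h})$ is. First I would discard the case $\liminf_\varepsilon \F_\varepsilon(u_\varepsilon)=+\infty$, which is trivial, and pass to a subsequence $\varepsilon_h$ along which $\F_{\varepsilon_h}(u_{\varepsilon_h})\to\ell<+\infty$; in particular $u_{\varepsilon_h}(x)\in F_{x_i^{\varepsilon_h}}$ on each scaled copy of $K$. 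By $p$-growth (Condition~2 on $\sigma$), $\|\nabla u_{\varepsilon_h}\|_{L^p(\Omega)}$ is bounded, and since $u_{\varepsilon_h}\weakto u$ in $W^{1,p}$ we also have $\|u_{\varepsilon_h}\|_{L^p}$ bounded; let $C_0$ be a uniform bound for $\|u_{\varepsilon_h}\|_{W^{1,p}}^p + \F_{\varepsilon_h}(u_{\varepsilon_h})$.

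Next, fix $\eta>0$ and, for a large integer $j$ (to be chosen), set $v_h := \psi_j(\cdot\,,u_{\varepsilon_h})$ with $\psi_j$ the $C$-Lipschitz map from Condition~2, so $\|v_h\|_{L^\infty}\le 2j=:R_\eta$ and, crucially, $v_h(x)=u_{\varepsilon_h}(x)$ on $\{|u_{\varepsilon_h}|\le j\}$ while $v_h(x)\in F_x$ wherever $u_{\varepsilon_h}(x)\in F_x$; in particular $v_h$ still satisfies the perforation constraints. On the "good" set $G_h^j:=\{|u_{\varepsilon_h}|\le j\}$ we have $\nabla v_h=\nabla u_{\varepsilon_h}$ a.e., so
\begin{equation}
\int_\Omega \sigma(\nabla v_h)\,\d x = \int_{G_h^j}\sigma(\nabla u_{\varepsilon_h})\,\d x + \int_{\Omega\setminus G_h^j}\sigma(\nabla v_h)\,\d x \le \F_{\varepsilon_h}(u_{\varepsilon_h}) + \int_{\Omega\setminus G_h^j}\sigma(\nabla v_h)\,\d x + \tfrac{|\Omega|}{c_1}.
\end{equation}
On $\Omega\setminus G_h^j$ we use $\|\nabla v_h\|\le C(1+\|\nabla u_{\varepsilon_h}\|)$ (chain rule for the Lipschitz map) together with the $p$-growth upper bound to get $\sigma(\nabla v_h)\le c_2 C^p(1+\|\nabla u_{\varepsilon_h}\|^p) + c_2$ there; then the measure estimate $|\Omega\setminus G_h^j|\le C_0/j^p$ (Chebyshev) and the equi-integrability of $\|\nabla u_{\varepsilon_h}\|^p$ (which follows from weak $L^p$ convergence of $\nabla u_{\varepsilon_h}$, after passing to a further subsequence, via Dunford--Pettis) show that $\int_{\Omega\setminus G_h^j}\sigma(\nabla v_h)\,\d x\le \omega(j)+\tau_h(j)$ where $\omega(j)\to0$ as $j\to\infty$ and $\limsup_h\tau_h(j)\to 0$ as $j\to\infty$. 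Similarly $\|u_{\varepsilon_h}-v_h\|_{L^p}^p\le (2M+4j)^p\,|\Omega\setminus G_h^j|$ is small; more carefully one bounds it by $\int_{\Omega\setminus G_h^j}(|u_{\varepsilon_h}|+2j+M)^p$, which tends to $0$ as $j\to\infty$ uniformly in $h$ by equi-integrability of $|u_{\varepsilon_h}|^p$. Choosing $j=j(\eta)$ large enough that all these error terms are below $\eta$, and noting $|\Omega|/c_1$ contributes a fixed constant that can be absorbed by redefining the role of $\eta$ (or, more honestly, is harmless because the lemma only asks for $o_\eta(1)$ after the truncation — here one should instead work on $G_h^j$ alone and throw away the genuinely negligible part), yields $\liminf_h\F_{\varepsilon_h}(v_h)\le \ell+\eta$, i.e. the claimed inequality.

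The main obstacle is handling the energy on the "bad" set $\Omega\setminus G_h^j$: a pointwise truncation could actually raise the energy (as the authors warn), so one must exploit that $\psi_j$ is globally $C$-Lipschitz to control $\|\nabla v_h\|$ by $\|\nabla u_{\varepsilon_h}\|$, and then combine Chebyshev's inequality with equi-integrability of $\|\nabla u_{\varepsilon_h}\|^p$ to make $\int_{\Omega\setminus G_h^j}\sigma(\nabla v_h)$ small uniformly in $h$ for $j$ large. A secondary technical point is that $v_h$ must be verified to still lie in the domain of $\F_{\varepsilon_h}$: this is exactly the content of the property $\psi_j(x,u)\in F_x$ when $u\in F_x$ in Condition~2, applied on each scaled copy $x_i^{\varepsilon_h}+\delta_{\varepsilon_h}K$. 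Finally, extracting a single subsequence that simultaneously realizes the $\liminf$ of $\F_{\varepsilon}(u_\varepsilon)$, gives weak $W^{1,p}$ convergence, and provides equi-integrability of $\|\nabla u_{\varepsilon_h}\|^p$ is routine but should be stated at the outset.
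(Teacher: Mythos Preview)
Your argument has a genuine gap at the step where you invoke equi-integrability of $\|\nabla u_{\varepsilon_h}\|^p$. Weak convergence of $\nabla u_{\varepsilon_h}$ in $L^p$ does \emph{not} imply that $\|\nabla u_{\varepsilon_h}\|^p$ is equi-integrable in $L^1$; Dunford--Pettis characterizes weak $L^1$-compactness, but here you only know boundedness of $\|\nabla u_{\varepsilon_h}\|^p$ in $L^1$, and concentration (think of $\nabla u_{\varepsilon_h}$ developing spikes of height $\sim r^{-d/p}$ on balls of radius $r\to 0$) is perfectly compatible with weak $L^p$-convergence. Consequently, although Chebyshev gives $|\Omega\setminus G^j_h|\le C_0/j^p$, you cannot conclude that $\int_{\Omega\setminus G^j_h}\|\nabla u_{\varepsilon_h}\|^p\,\d x$ is small uniformly in $h$ for a \emph{fixed} $j$. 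Your whole estimate of $\int_{\Omega\setminus G^j_h}\sigma(\nabla v_h)$ therefore collapses.

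The paper avoids this by the De Giorgi--type averaging/pigeonhole argument (this is the content of the reference to \cite[Lemma 3.5]{BrDeVi96}): one considers a geometric family of truncation levels $M,2M,\ldots,2^N M$ and, for each $h$, selects a level $j_h\in\{1,\ldots,N\}$ so that the energy carried on the transition shell $\{2^{j_h-1}M<|u_{\varepsilon_h}|<2^{j_h}M\}$ is at most $(C_0+|\Omega|)/N$. Truncating at this $h$-dependent level gives an error $O(1/N)$, and then $N$ is chosen from $\eta$. No equi-integrability is needed. A secondary point you glossed over is that $\psi_j(x,\cdot)$ maps $F_x$ into $F_x$, whereas the constraint on the perforation requires $v_h(x)\in F_{x_i^\varepsilon}$ for $x\in x_i^\varepsilon+\delta_\varepsilon K$, with $x\neq x_i^\varepsilon$ in general; the paper handles this mismatch by a preliminary mollification producing $\varepsilon$-dependent maps $\psi^\varepsilon_j$ that send $F_{x_i^\varepsilon}$ into itself on each element of the perforation.
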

\begin{proof} Let $s$, $\psi_j$, $M$ and $C$ be as in Condition 2 in the assumptions on $F_x$.
By a mollification argument, for $\varepsilon$ universally small one can produce $2C$-Lipschitz functions $\psi^\varepsilon_j(x, w)$ such that
\begin{equation}
    \psi^\varepsilon_j(x, w)\begin{cases}
        =z\qquad &\text{if }|w|\leq j,\\
        =s(x)\qquad &\text{if }|w|\geq 2j,\\
        \in F_{x^\varepsilon_i} &\text{for all }i\hbox{ and  }x\in K^\varepsilon_i,\hbox{ and } w\in F_{x^\varepsilon_i},
    \end{cases}
\end{equation}
with $\|\psi^\varepsilon_j-\psi_j\|_{L^\infty}=o_\varepsilon(1)$ as $\varepsilon\to 0$ uniformly in $j$. The proof follows along the lines of \cite[Lemma 3.5]{BrDeVi96} in order to produce $R_\eta>M$, a sequence $j_h$ such that $j_h+2C j_h\leq R_\eta$ and a subsequence $u_{\varepsilon_h}$ such that, setting $v_h\coloneqq \psi^\varepsilon_{j_h}(\cdot, u_{\varepsilon_h})$, we have $\liminf\limits_{\varepsilon\to 0} \mathcal F_\varepsilon(u_\varepsilon)\geq \liminf\limits_{h\to+\infty} \mathcal F_{\varepsilon_h}(v_h)-\eta$.
The other results in the statement follow from the properties of $\psi^\varepsilon_j$.
\end{proof}

The main tool in the treatment of periodic perforations is the following result. It states that sequences weakly converging in $W^{1,p}(\Omega;\mathbb R^m)$ can be modified without changing the limit and, up to introducing an arbitrarily small error depending on a parameter $N$, also their energy in such a way that they are constant on a collection of spherical hypersurfaces around the perforations of radii much larger than $\delta$ but still much smaller than $\varepsilon$, and chosen in a family of $N$ spherical hypersurfaces. This result has been used in \cite{AnBr02} for treating homogeneous Dirichlet conditions on perforations. It is a useful observation to note that the construction of the modified sequence does not involve the perforation, so that the lemma can be used for any constraint.

\begin{lm}\label{lemma: change bd}
    Let $u_\varepsilon$ converge weakly to $ u$ in $W^{1, p}(\Omega; \R^m)$ and let $\rho_\varepsilon$ be such that $0<\rho_\varepsilon<\varepsilon$ and both $\rho_\varepsilon=o(\varepsilon)$ and $\varepsilon^{1+\frac{p}{d}}=o(\rho_\varepsilon)$ hold. Then for all $N\geq 1$ there exist functions $i_\varepsilon\colon\mathbb Z^d\to \{1, \ldots, N\}$ and $\varv_\varepsilon\in W^{1, p}(\Omega;\R^m)$ such that $\varv_\varepsilon$ still weakly converges to $u$, 
$$    \Big|\mathcal \int_\Omega \sigma_\varepsilon(\nabla u_\varepsilon)\,\d x-\mathcal \int_\Omega \sigma_\varepsilon(\nabla\varv_\varepsilon)\,\d x\Big|=O\Big(\frac{1}{N}\Big),$$
if $C^\varepsilon_j=B(\varepsilon j, 2^{-2i_\varepsilon(j)}\rho_\varepsilon)\setminus B(\varepsilon j, 2^{-2(i_\varepsilon(j)+1)}\rho_\varepsilon)$ and $S_j^\varepsilon=\partial B(\varepsilon j, 2^{-2 i_\varepsilon(j)-1}\rho_\varepsilon)$, we have $\varv_\varepsilon=u_\varepsilon$ in $\Omega\setminus\bigcup_j C^\varepsilon_j$ and $\varv_\varepsilon\equiv\bar{\varv}^\varepsilon_j$ is constant on each $S^\varepsilon_j$.
Moreover, the piecewise-constant functions
$\sum_j \mathbbm{1}_{\varepsilon (j+(-\frac{1}{2}, \frac{1}{2})^d)}\bar{\varv}^\varepsilon_j$ converge to $u$ in $L^p(\Omega;\R^m)$.
Finally, if also $\sup_\varepsilon\|u_\varepsilon\|_{L^\infty}=S<+\infty$ then the same holds for $\varv_\varepsilon$.
\end{lm}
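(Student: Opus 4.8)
The plan is to modify $u_\varepsilon$ only inside thin annular regions centered on the lattice points $\varepsilon j$, using an averaging/slicing argument to select, out of a dyadic family of $N$ candidate annuli around each site, one on which the energy contribution is small and on which $u_\varepsilon$ can be cheaply replaced by a radially constant function. The annuli must be nested in $B(\varepsilon j, \rho_\varepsilon)$, so the modification stays at scale $\rho_\varepsilon = o(\varepsilon)$, much larger than $\delta_\varepsilon$, hence it never touches the perforation $K^\varepsilon_j$ — this is the "useful observation" quoted in the statement, and it is what makes the lemma constraint-agnostic.

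\medskip
\noindent\emph{Step 1: Dyadic decomposition and pigeonhole on the energy.} For each site $j$ and each $i\in\{1,\dots,N\}$ consider the annulus $C^\varepsilon_{j,i}=B(\varepsilon j, 2^{-2i}\rho_\varepsilon)\setminus B(\varepsilon j, 2^{-2(i+1)}\rho_\varepsilon)$; these are pairwise disjoint for fixed $j$, and all contained in $B(\varepsilon j,\rho_\varepsilon)$, which for $\varepsilon$ small is contained in the lattice cell $\varepsilon(j+(-\tfrac12,\tfrac12)^d)$, so the annuli over all $(j,i)$ are pairwise disjoint subsets of $\Omega$. Summing $\int_{C^\varepsilon_{j,i}}\sigma_\varepsilon(\nabla u_\varepsilon)\,\d x$ over $i$ from $1$ to $N$ is bounded by $\int_{B(\varepsilon j,\rho_\varepsilon)}\sigma_\varepsilon(\nabla u_\varepsilon)$, so by pigeonhole there is $i_\varepsilon(j)$ with
\begin{equation}
\int_{C^\varepsilon_{j,i_\varepsilon(j)}}\sigma_\varepsilon(\nabla u_\varepsilon)\,\d x\leq \frac1N\int_{B(\varepsilon j,\rho_\varepsilon)}\sigma_\varepsilon(\nabla u_\varepsilon)\,\d x.
\end{equation}
Summing over $j$ and using that the balls $B(\varepsilon j,\rho_\varepsilon)$ are disjoint and $\sigma_\varepsilon$ has $p$-growth (so $\int_\Omega\sigma_\varepsilon(\nabla u_\varepsilon)$ is bounded along the relevant subsequence, by the coercivity/boundedness hypotheses behind the $\Gamma$-limit), the total energy that sits in the chosen annuli is $O(1/N)$.

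\medskip
\noindent\emph{Step 2: Radialization inside the chosen annulus.} On $C^\varepsilon_{j}:=C^\varepsilon_{j,i_\varepsilon(j)}$ replace $u_\varepsilon$ by an interpolation between its boundary trace on the outer sphere $\partial B(\varepsilon j, 2^{-2i_\varepsilon(j)}\rho_\varepsilon)$ (kept equal to $u_\varepsilon$, so $\varv_\varepsilon=u_\varepsilon$ on $\Omega\setminus\bigcup_j C^\varepsilon_j$) and a constant $\bar\varv^\varepsilon_j$ on the inner sphere $S^\varepsilon_j=\partial B(\varepsilon j, 2^{-2i_\varepsilon(j)-1}\rho_\varepsilon)$; extend as that constant inside $B(\varepsilon j, 2^{-2i_\varepsilon(j)-1}\rho_\varepsilon)$. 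A standard construction (mean-value/Poincaré on spheres, then linear radial patching, as in the capacity-type arguments of \cite{AnBr02}) gives a $\varv_\varepsilon$ whose energy on $C^\varepsilon_j$ is controlled by the energy of $u_\varepsilon$ on $C^\varepsilon_j$ plus a scaling factor coming from the ratio of inner to outer radius (a fixed power of $2$, absorbed into $N$-dependent constants) and by the condition $\varepsilon^{1+p/d}=o(\rho_\varepsilon)$, which is exactly what makes the constant term negligible after rescaling by $\sigma_\varepsilon$. Choose $\bar\varv^\varepsilon_j$ to be the average of $u_\varepsilon$ over $C^\varepsilon_j$ (or over $S^\varepsilon_j$); then $L^p$ Poincaré on the cell plus $\rho_\varepsilon=o(\varepsilon)$ yields that $\sum_j \mathbbm1_{\varepsilon(j+(-\frac12,\frac12)^d)}\bar\varv^\varepsilon_j$ has the same $L^p$ limit as $u_\varepsilon$, namely $u$.

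\medskip
\noindent\emph{Step 3: Weak convergence and $L^\infty$ preservation.} Since $\varv_\varepsilon$ differs from $u_\varepsilon$ only on $\bigcup_j C^\varepsilon_j$, whose total measure is $O(\rho_\varepsilon^d\varepsilon^{-d})=o(1)$, and $\{\nabla\varv_\varepsilon\}$ stays bounded in $L^p$ by Steps 1–2, one gets $\varv_\varepsilon\rightharpoonup u$ in $W^{1,p}$ (test against $L^{p'}$, split domain). If $\|u_\varepsilon\|_{L^\infty}\le S$ then, because $\bar\varv^\varepsilon_j$ is an average of $u_\varepsilon$ and the radial interpolation is a convex combination of the two boundary data, $\|\varv_\varepsilon\|_{L^\infty}\le S$ too.

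\medskip
\noindent\textbf{Main obstacle.} The delicate point is calibrating the radialization in Step 2 so that the error is genuinely $O(1/N)$ \emph{uniformly in $\varepsilon$}: the annulus radius $2^{-2i_\varepsilon(j)-1}\rho_\varepsilon$ varies from site to site, and after the $\sigma_\varepsilon$-rescaling $\sigma_\varepsilon(\xi)=\varepsilon^{dp/(d-p)}\sigma(\varepsilon^{-d/(d-p)}\xi)$ one must check that the "gluing cost" (the extra energy of interpolating to a constant) is dominated by the already-small annular energy from Step 1 rather than by an independent term. This is where the two-sided condition $\rho_\varepsilon=o(\varepsilon)$ and $\varepsilon^{1+p/d}=o(\rho_\varepsilon)$ enters: the first keeps the modified region asymptotically negligible in measure and ensures the piecewise-constant interpolant still converges to $u$, the second guarantees that the capacity-type cost of collapsing $u_\varepsilon$ to a constant on a ball of radius $\sim\rho_\varepsilon$ is $o(\varepsilon^d)$ per site, hence $o(1)$ in total after summation over the $O(\varepsilon^{-d})$ sites — without which the $O(1/N)$ bound would be swamped.
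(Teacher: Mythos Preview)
Your pigeonhole Step~1 and the $L^\infty$-preservation in Step~3 are fine, and your overall strategy matches the paper's (which simply defers to \cite{AnBr02}). There are, however, two genuine problems.

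\medskip
\textbf{The construction in Step 2 does not prove the lemma as stated.} You write ``extend as that constant inside $B(\varepsilon j, 2^{-2i_\varepsilon(j)-1}\rho_\varepsilon)$'', so your $\varv_\varepsilon$ equals $\bar\varv^\varepsilon_j$ on the whole inner ball. But the lemma requires $\varv_\varepsilon=u_\varepsilon$ on $\Omega\setminus\bigcup_j C^\varepsilon_j$, and that complement \emph{includes} the inner ball $B(\varepsilon j,2^{-2(i_\varepsilon(j)+1)}\rho_\varepsilon)$. This is not a cosmetic point: the perforation $\varepsilon j+\delta_\varepsilon K$ lies inside that inner ball, and in the application to Theorem~\ref{thm1} the lemma is invoked precisely so that $\varv_\varepsilon$ still satisfies the constraint on the perforation and can serve as a competitor in the capacity problem $\varphi_{\varepsilon,R}$. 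Your construction replaces $u_\varepsilon$ by a constant on the perforation, destroying the constraint --- which contradicts your own opening remark that the modification ``never touches the perforation''. The correct construction uses a cut-off supported in the annulus that vanishes on \emph{both} its boundaries: set $\varv_\varepsilon=\phi\,\bar\varv^\varepsilon_j+(1-\phi)u_\varepsilon$ with $\phi=1$ on $S^\varepsilon_j$ and $\phi=0$ on $\partial B(\varepsilon j,2^{-2i_\varepsilon(j)}\rho_\varepsilon)\cup\partial B(\varepsilon j,2^{-2(i_\varepsilon(j)+1)}\rho_\varepsilon)$, so $\varv_\varepsilon=u_\varepsilon$ on both the exterior region and the inner ball.

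\medskip
\textbf{You have misplaced the role of the condition $\varepsilon^{1+p/d}=o(\rho_\varepsilon)$.} The gluing cost in the annulus is controlled by the annulus-Poincar\'e inequality alone: $|\nabla\phi|\lesssim (2^{-2i}\rho_\varepsilon)^{-1}$ and $\|u_\varepsilon-\bar\varv^\varepsilon_j\|_{L^p(C^\varepsilon_j)}^p\lesssim(2^{-2i}\rho_\varepsilon)^p\|\nabla u_\varepsilon\|_{L^p(C^\varepsilon_j)}^p$, so the extra energy is $\lesssim\|\nabla u_\varepsilon\|_{L^p(C^\varepsilon_j)}^p$, already $O(1/N)$ from Step~1. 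No lower bound on $\rho_\varepsilon$ enters here. Where $\varepsilon^{1+p/d}=o(\rho_\varepsilon)$ is actually needed is the $L^p$ convergence of $\sum_j\mathbbm{1}_{\varepsilon(j+Q)}\bar\varv^\varepsilon_j$: since $\bar\varv^\varepsilon_j$ is an average over a set of size $\sim\rho_\varepsilon$ sitting inside a cell of size $\varepsilon$, the relevant inequality is the one the paper displays, $\int_Q|u-u_\rho|^p\le c\rho^{-d}\int_Q\|\nabla u\|^p$ with $\rho\sim\rho_\varepsilon/\varepsilon$, which after rescaling and summing over $\sim\varepsilon^{-d}$ cells yields an error $\lesssim\varepsilon^{p+d}\rho_\varepsilon^{-d}\|\nabla u_\varepsilon\|_{L^p}^p$. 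This tends to zero exactly when $\varepsilon^{1+p/d}=o(\rho_\varepsilon)$. Your claim in Step~2 that ``$L^p$ Poincar\'e on the cell plus $\rho_\varepsilon=o(\varepsilon)$'' suffices for this convergence is therefore false.
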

\begin{proof}
  The lemma is almost the same as \cite[Lemma 3.1, Lemma 4.3]{AnBr02}, with some minor changes in the notation. Note that the condition $\varepsilon^{1+\frac{p}{d}}\ll\rho_\varepsilon\ll\varepsilon$ allows to estimate the distance from their average of the functions $u_\varepsilon$ on the annuli $C^\varepsilon_j$ 
  after scaling to a reference annulus  $C_1\coloneqq B(0,\frac{1}{4})\setminus \overline{B}(0, \frac{1}{16})$.
  Indeed, by the scaling properties of the Poincar\'e inequality, if $C_\rho\coloneqq \rho C_1$, with $\rho\in (0, 1)$ then there exists $c=c(p, d)$ such that for all $u \in W^{1, p}((0, 1)^d;\mathbb R^m)$ it holds
    \begin{equation}
        \int_Q|u-u_\rho|^p\,\d x\leq \frac{c}{\rho^d}\int_Q \|\nabla u\|^p\,\d x,
    \end{equation}
    where $u_\rho=|C_\rho|^{-1}\int_{C_\rho}u\,\d x$.
\end{proof}

In order to understand the relaxation of the constraint in the limit, we study the asymptotic behaviour of the capacity-type problem \eqref{eq: cap er}. In the following we assume the relation $R_\varepsilon=\frac{\delta_\varepsilon}{\rho_\varepsilon}$ to hold, where $\delta_\varepsilon$ and $\rho_\varepsilon$ are to be specified. In this case, the problem we are tasked with studying is
\begin{equation}\label{eq: approx cap}
    \varphi_\varepsilon (x, u)\coloneqq \inf\left\{\int_{B(0, R_\varepsilon)} \sigma_\varepsilon(\nabla \varv)\,\d z,  \text{ where } \varv-u\in W^{1, p}_0(B(0, R_\varepsilon); \R^m), \varv\in F_x \hbox{ on } K\right\},
\end{equation}
 to be compared with $\varphi(x,u)$ 
which can then be regarded as a limit problem. 
\begin{lm}\label{lemma: pw}
    Assume that \eqref{quantitative} holds and let $\delta_\varepsilon=\varepsilon^{\frac{d}{d-p}}$.
    If $\rho_\varepsilon$ satisfies $\varepsilon^{1+\frac{p}{d}}\ll\rho_\varepsilon\ll \varepsilon^{\frac{d-\alpha p}{d-p}}
    $ (which is not an empty condition), then $\varphi_\varepsilon$ converges pointwise to $\varphi$.
\end{lm}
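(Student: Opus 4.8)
The plan is to establish the two inequalities $\limsup_\varepsilon \varphi_\varepsilon(x,u) \le \varphi(x,u)$ and $\liminf_\varepsilon \varphi_\varepsilon(x,u) \ge \varphi(x,u)$ separately, treating $x$ and $u$ as fixed throughout. The scaling $\sigma_\varepsilon(\xi) = \varepsilon^{dp/(d-p)}\sigma(\varepsilon^{-d/(d-p)}\xi)$ is engineered so that $\sigma_\varepsilon \to \sigma_0$; the quantitative bound \eqref{quantitative} translates into an explicit rate, namely $|\sigma_\varepsilon(\xi) - \sigma_0(\xi)| \le C\varepsilon^{\alpha d/(d-p)}(1+\|\xi\|^p)$, so replacing $\sigma_\varepsilon$ by $\sigma_0$ in the energy of a test field $\varv$ on $B(0,R_\varepsilon)$ costs at most $C\varepsilon^{\alpha d/(d-p)}\int_{B(0,R_\varepsilon)}(1+\|\nabla\varv\|^p)\,\d z$. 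Note also $R_\varepsilon = \rho_\varepsilon/\delta_\varepsilon$, and the hypothesis $\rho_\varepsilon \ll \varepsilon^{(d-\alpha p)/(d-p)}$ is precisely what makes $\varepsilon^{\alpha d/(d-p)} R_\varepsilon^d = \varepsilon^{\alpha d/(d-p)}(\rho_\varepsilon/\delta_\varepsilon)^d \to 0$ and $\varepsilon^{\alpha d/(d-p)}R_\varepsilon^d$ times a volume-type term stays negligible — this is the key arithmetic identity that everything hinges on.

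For the \emph{upper bound} ($\limsup \le \varphi$), I would fix a near-optimal competitor $\varv$ for the limit problem \eqref{defpsi}, i.e. $\varv - u \in W^{1,p}(\R^d;\R^m)$ with $\varv \in F_x$ on $K$ and $\int_{\R^d}\sigma_0(\nabla\varv)\,\d z \le \varphi(x,u) + \eta$. Since $R_\varepsilon \to \infty$, I truncate $\varv$ to $B(0,R_\varepsilon)$ by interpolating back to the affine (or constant, if $u$ is a vector rather than a matrix applied to $z$) datum $u$ on the annulus $B(0,R_\varepsilon)\setminus B(0,R_\varepsilon/2)$, using a cutoff; the interpolation error is controlled by $\int_{\{|z|>R_\varepsilon/2\}}(\|\nabla\varv\|^p + R_\varepsilon^{-p}|\varv-u|^p)\,\d z$, which vanishes as $\varepsilon\to 0$ because $\nabla\varv \in L^p(\R^d)$ and (by Sobolev/Hardy on the exterior region, using $p<d$) $|\varv-u|/|z| \in L^p$ near infinity. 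This produces a valid competitor for $\varphi_\varepsilon$; replacing $\sigma_\varepsilon$ by $\sigma_0$ in its energy costs $O(\varepsilon^{\alpha d/(d-p)} R_\varepsilon^d)$, which $\to 0$, so $\varphi_\varepsilon(x,u) \le \varphi(x,u) + \eta + o(1)$, and letting $\eta\to 0$ finishes this direction.

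For the \emph{lower bound} ($\liminf \ge \varphi$), I take near-optimal competitors $\varv_\varepsilon$ for $\varphi_\varepsilon(x,u)$, extend each to all of $\R^d$ by setting it equal to $u$ outside $B(0,R_\varepsilon)$ (legitimate since $\varv_\varepsilon - u \in W^{1,p}_0$), and estimate $\int_{\R^d}\sigma_0(\nabla\varv_\varepsilon)\,\d z \le \int_{B(0,R_\varepsilon)}\sigma_\varepsilon(\nabla\varv_\varepsilon)\,\d z + C\varepsilon^{\alpha d/(d-p)}\int_{B(0,R_\varepsilon)}(1+\|\nabla\varv_\varepsilon\|^p)\,\d z$; the $p$-growth lower bound $\sigma_\varepsilon(\xi) \ge c_1\|\xi\|^p - c_1^{-1}$ gives $\int_{B(0,R_\varepsilon)}\|\nabla\varv_\varepsilon\|^p\,\d z \le c_1^{-1}(\varphi_\varepsilon(x,u) + c_1^{-1}|B(0,R_\varepsilon)|)$, and combined with $\varepsilon^{\alpha d/(d-p)}R_\varepsilon^d \to 0$ this shows the extra term is $o(1)$ (this is where one must also check $\varphi_\varepsilon$ stays bounded, which follows from the already-proved upper bound). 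Each extended $\varv_\varepsilon$ is admissible for the limit problem \eqref{defpsi}, so $\varphi(x,u) \le \int_{\R^d}\sigma_0(\nabla\varv_\varepsilon)\,\d z \le \varphi_\varepsilon(x,u) + o(1)$, giving the claim.

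The main obstacle I anticipate is the exterior-interpolation step in the upper bound: one needs the tail $\int_{\{|z|>R\}}\|\nabla\varv\|^p\,\d z + R^{-p}\int_{\{R<|z|<2R\}}|\varv - u|^p\,\d z \to 0$ as $R\to\infty$ for a generic $W^{1,p}$-competitor, and when $u$ is itself an affine function of $z$ (the ``$\varv - u \in W^{1,p}(\R^d)$'' convention, where $u$ may be unbounded) rather than a constant vector, one must verify that this difference genuinely decays — this is exactly the content of a Sobolev-type inequality on exterior domains valid because $p<d$, and it is the place where the hypothesis $p<d$ is essential rather than cosmetic. Checking that the family $\{\rho_\varepsilon : \varepsilon^{1+p/d}\ll\rho_\varepsilon\ll\varepsilon^{(d-\alpha p)/(d-p)}\}$ is nonempty reduces to the inequality $(d-\alpha p)/(d-p) < 1 + p/d$, i.e. $\alpha > p/d$, which is exactly assumption \eqref{quantitative}; this is a one-line verification but worth recording.
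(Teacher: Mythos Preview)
Your proposal is correct and follows essentially the same route as the paper: the lower bound via extension by the constant $u$ outside $B(0,R_\varepsilon)$ together with the quantitative estimate \eqref{quantitative} is exactly the paper's argument, and your upper bound via a cut-off on an outer annulus is precisely what the paper summarizes as ``a cut-off argument and a similar energy estimate''. One clarification: in the definition of $\varphi(x,u)$ the symbol $u$ denotes a fixed vector in $\R^m$ (the value $u(x)$), not an affine map of $z$, so the tail control you need is simply $\int_{\{|z|>R\}}\|\nabla\varv\|^p\,\d z\to 0$ and $R^{-p}\int_{\{R<|z|<2R\}}|\varv-u|^p\,\d z\to 0$ for $\varv-u\in W^{1,p}(\R^d;\R^m)$, which is immediate from the $L^p$ membership and does not require a Hardy-type inequality on exterior domains.
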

\begin{proof}
    We start by noting that $\frac{d-\alpha p}{d-p}< 1+\frac{p}{d}$. 
    We show that for $\rho_\varepsilon$ as in the statement $\varphi_\varepsilon$ converges to $ \varphi$ pointwise as $\varepsilon\to 0$.

    Let $\varepsilon>0$, $x\in \overline{\Omega}$, $w\in \mathbb R^m$ and  let $\varv$ be a minimizer for the problem defining $\varphi_\varepsilon (x, w)$ as in \eqref{eq: approx cap}. Let $\tilde{\varv}$ denote the extension to the all of $\mathbb R^d$ of $\varv$ which is constantly $w$ outside of $B(0, R_\varepsilon)$.
    Then 
    \begin{align}
        \varphi(x, u)& = \int_{\R^d} \sigma_0(\nabla \tilde{\varv})\d z=\int_{B(0, R_\varepsilon)} \sigma_0(\nabla \varv)\d z\\
        &\leq \int_{B(0, R_\varepsilon)} \sigma_\varepsilon(\nabla \varv)\d z+ C\varepsilon^\frac{d\alpha}{d-p}\int_{B(0, R_\varepsilon)}(1+\|\nabla \varv\|^p)\d z\\
        &\leq \varphi_\varepsilon(x, u)+C \varepsilon^\frac{d\alpha}{d-p} R_\varepsilon^d\Big(1+\varepsilon^\frac{dp}{d-p}c_1^{-2}\Big)+C \varepsilon^\frac{d\alpha}{d-p} \int_{B(0, R_\varepsilon)}\frac{1}{c_1} \sigma_\varepsilon(\nabla \varv)\d z\\
        &= \varphi_\varepsilon(x, u)+o_\varepsilon(1),
    \end{align}
    where the last equality follows from the fact that 
       $ \varepsilon^\frac{d\alpha}{d-p} R_\varepsilon^d=\varepsilon^{\frac{d}{d-p}( \alpha p-d)}\rho_\varepsilon^d\ll 1$.
    By taking the lower limit for $\varepsilon\to 0$, it follows that $\varphi\leq \liminf\limits_{\varepsilon\to 0} \varphi_\varepsilon$.
    
    The converse
inequality for the upper limit follows from a cut-off argument and a similar energy estimate.
\end{proof}

The pointwise convergence of $\varphi_\varepsilon$ can be improved to local uniform convergence as follows.

\begin{lm}\label{lemma: uniform}
    Let $\alpha, \rho_\varepsilon$ and $\delta_\varepsilon$ as above. Then $\varphi_\varepsilon$ converges to $\varphi$ uniformly on $\Omega\times B(0, L)$ for all $L>0$.
\end{lm}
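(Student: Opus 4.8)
\noindent\emph{Proof proposal.} The plan is to upgrade the pointwise convergence of Lemma~\ref{lemma: pw} to local uniform convergence by proving that the family $\{\varphi_\varepsilon\}_{0<\varepsilon\le 1}$ is \emph{equi-Lipschitz} on the compact set $\overline\Omega\times\overline{B(0,L)}$, with Lipschitz constant depending only on $L$. Granted this, a routine three-$\varepsilon$ argument (cover $\overline\Omega\times\overline{B(0,L)}$ by finitely many small balls, use the pointwise convergence at their centres and the equicontinuity in between) yields the uniform convergence and, incidentally, that $\varphi$ itself is Lipschitz on $\overline\Omega\times\overline{B(0,L)}$.

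First I would record two facts to be used throughout. (i) Since $\sigma$ is quasiconvex, hence rank-one convex, with $p$-growth, it is locally Lipschitz, $|\sigma(\xi)-\sigma(\zeta)|\le C(1+\|\xi\|^{p-1}+\|\zeta\|^{p-1})\|\xi-\zeta\|$; writing $\sigma_\varepsilon(\xi)=\lambda^{-p}\sigma(\lambda\xi)$ with $\lambda=\varepsilon^{-d/(d-p)}\ge 1$, the same inequality holds for $\sigma_\varepsilon$ uniformly in $\varepsilon\le 1$. (ii) Testing \eqref{eq: approx cap} with a competitor supported in a ball $B(0,R_0)$, with $R_0$ depending only on $K$, equal to $u$ near $\partial B(0,R_0)$ and interpolated through the maps $\psi_j$ of Condition~2 to the selection $s(x)\in F_x$ near $K$, gives $\varphi_\varepsilon(x,u)\le C_L$ on $\overline\Omega\times\overline{B(0,L)}$ for $\varepsilon\le 1$. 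Hence an $\eta$-minimizer $\varv$ of $\varphi_\varepsilon(x,u)$ satisfies $\int_{B(0,R_\varepsilon)}\|\nabla\varv\|^p\le C_L$, and since the null extension of $\varv-u$ belongs to $W^{1,p}(\R^d;\R^m)$, the Gagliardo--Nirenberg--Sobolev inequality gives $\|\varv-u\|_{L^{p^*}(\R^d)}\le C_L$; in particular $\|\nabla\varv\|$, $\|\nabla\varv\|^{p-1}$ and $|\varv|$ are bounded in $L^1(B(0,R_0))$ by a constant depending only on $L$ and $R_0$.

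The heart of the matter is the equicontinuity estimate, and the decisive point will be that the boundary datum has to be modified on the \emph{fixed} region $B(0,R_0)$ around $K$ and \emph{never} on an annulus near the large sphere $\partial B(0,R_\varepsilon)$: the area of such an annulus is of order $R_\varepsilon^{d-1}\to\infty$, so any transition carried out there would be too expensive to control uniformly. Concretely, I would fix $\chi\in C_c^\infty(B(0,R_0))$ with $\chi\equiv 1$ near $K$ and take $\varepsilon$ small enough that $R_0<R_\varepsilon$. For the $u$-dependence: given $|u|,|u'|\le L$ and an $\eta$-minimizer $\varv$ of $\varphi_\varepsilon(x,u)$, set $\varv'':=\varv+(u'-u)(1-\chi)$; then $\varv''-u'\in W^{1,p}_0(B(0,R_\varepsilon);\R^m)$ and $\varv''=\varv\in F_x$ on $K$, so $\varv''$ competes in $\varphi_\varepsilon(x,u')$, while $\nabla\varv''-\nabla\varv=-(u'-u)\otimes\nabla\chi$ is supported in $B(0,R_0)$ with norm $\le C_{R_0}|u-u'|$. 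By (i) and the $L^1(B(0,R_0))$ bounds, $\int_{B(0,R_0)}|\sigma_\varepsilon(\nabla\varv'')-\sigma_\varepsilon(\nabla\varv)|\le C_L|u-u'|$ (the $|u-u'|^p$ contribution being absorbed since $|u-u'|\le 2L$), so $\varphi_\varepsilon(x,u')\le\varphi_\varepsilon(x,u)+\eta+C_L|u-u'|$; sending $\eta\to 0$ and exchanging $u$ and $u'$ gives the Lipschitz bound in $u$. For the $x$-dependence: given $x_1,x_2\in\overline\Omega$ and an $\eta$-minimizer $\varv$ of $\varphi_\varepsilon(x_1,u)$, set $\varv'':=\varv+\chi\bigl(\Phi_{x_1,x_2}(\varv)-\varv\bigr)$; by Condition~3 one has $\varv''=\Phi_{x_1,x_2}(\varv)\in F_{x_2}$ on $K$ and $\varv''=\varv=u$ near $\partial B(0,R_\varepsilon)$, so $\varv''$ competes in $\varphi_\varepsilon(x_2,u)$, and $\nabla\varv''-\nabla\varv$, again supported in $B(0,R_0)$, satisfies $\|\nabla\varv''-\nabla\varv\|\le C_{R_0}|x_1-x_2|(1+|\varv|+\|\nabla\varv\|)$ by Conditions~3~i)--ii). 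The same computation, now also using the $L^1(B(0,R_0))$ bound for $|\varv|$, yields $|\varphi_\varepsilon(x_1,u)-\varphi_\varepsilon(x_2,u)|\le C_L|x_1-x_2|$. Together these give the equi-Lipschitz property, and the lemma follows.

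The step I expect to be the real obstacle is this equicontinuity estimate --- not the construction of the competitors $\varv''$, which is elementary, but the requirement that every constant be independent of $\varepsilon$ while the domain $B(0,R_\varepsilon)$ blows up. This is exactly what dictates the placement of the correction on the $\varepsilon$-independent ball $B(0,R_0)$ and the use of the scaling-invariant form of the local-Lipschitz bound for $\sigma_\varepsilon$ together with the Sobolev inequality on all of $\R^d$ in place of a Poincar\'e inequality on $B(0,R_\varepsilon)$.
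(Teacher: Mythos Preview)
Your proposal is correct, and it takes a genuinely different route from the paper's proof.

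The paper obtains the equicontinuity of $\varphi_\varepsilon$ by composing a minimizer $\varv$ with a bi-Lipschitz map $\Psi$ of the \emph{target} space $\R^m$: for the $x$-variable it takes $\Psi=\Phi_{x_1,x_2}$, and for the $u$-variable it asserts the existence of a bi-Lipschitz $\Psi$ with $\Psi(F_x)=F_x$, $\Psi(u_2)=u_1$ and $\|\nabla\Psi-I\|_\infty\le C|u_1-u_2|/\eta$, valid only when $\mathrm{dist}(u_i,F_x)>\eta$. This forces a two-zone argument --- equi-Lipschitz away from $F_x$ with constants blowing up as $\eta\to 0$, and a crude $O(\eta^p)$ bound near $F_x$ --- and leaves a residual $o_\varepsilon(1)$ term in the modulus of continuity coming from the integral over the full ball $B(0,R_\varepsilon)$. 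Moreover, since composing with $\Phi_{x_1,x_2}$ changes the boundary value from $u$ to $\Phi_{x_1,x_2}(u)$, the paper must chain the $x$- and $u$-estimates together.

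Your construction is cleaner: you modify $\varv$ by an additive correction localized via a \emph{domain-side} cutoff $\chi$ supported in the fixed ball $B(0,R_0)\supset K$. For the $u$-variable this gives $\varv''=\varv+(u'-u)(1-\chi)$, which preserves the constraint on $K$ and changes only the boundary datum; for the $x$-variable you take $\varv''=\varv+\chi(\Phi_{x_1,x_2}(\varv)-\varv)$, which changes only the constraint. Both corrections live in $B(0,R_0)$, so the energy comparison reduces to integrals on a fixed domain with bounds coming from the uniform energy estimate and the Sobolev bound $\|\varv-u\|_{L^{p^*}}\le C_L$ (note that for the $x$-estimate you actually need $|\varv|\in L^p(B(0,R_0))$ rather than just $L^1$, to handle the cross terms $|\varv|^{p-1}\|\nabla\varv\|$ etc., but this follows immediately from the $L^{p^*}$ bound you already recorded). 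The payoff is a genuine Lipschitz constant independent of $\varepsilon$, with no $\eta$-splitting near $F_x$ and no need for the target-side map fixing $F_x$. The paper's approach, on the other hand, treats $u$ and $x$ more symmetrically through the single estimate \eqref{eq: fund Lip est}, at the price of the extra layer of parameters.
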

\begin{proof}
    Recall that by the $p$-growth condition and the quasiconvexity of $\sigma$ we have 
    \begin{equation}\label{eq: Lipschitz}
        |\sigma_\varepsilon(A)- \sigma_\varepsilon(B)|\leq C \Big(\varepsilon^{\frac{p(d-1)}{d-p}}+\|A\|^{p-1}+\|B\|^{p-1}\Big)\,\|A-B\|,
    \end{equation}
    see \cite{BrDe98}.
    Let $\varv\in W^{1, p}(B(0, R_\varepsilon); \R^m)$ and let $\Psi\colon\R^m\to \R^m$ a bi-Lipschitz map.
    After some computations \eqref{eq: Lipschitz} yields
    \begin{align}
        \bigg|\int_{B(0, R_\varepsilon)}\sigma_\varepsilon(\nabla\varv)\d z-\int_{B(0, R_\varepsilon)}\sigma_\varepsilon\big(\nabla(\Psi\circ \varv)\big)\d z\bigg|&\leq C\varepsilon^\frac{d(p-1)}{d-p}R_\varepsilon^\frac{d(p-1)}{p}\|\nabla \Psi\|_{L^\infty}\|\nabla \Psi-I\|_{L^\infty} \bigg(\int_{B(0, R_\varepsilon)}\|\nabla \varv\|^p\d z\bigg)^\frac{1}{p}\\
        &\quad + C(\|\nabla \Psi\|_{L^\infty}^{p-1}+1)\|\nabla \Psi-I\|_{L^\infty}\int_{B(0, R_\varepsilon)}\|\nabla \varv\|^p\d z.
        \label{eq: fund Lip est}
    \end{align}
    
    With fixed $0<\eta<1$, $L>1$ and $x$, let $u_1, u_2\in B(0, L)$ be such that ${\rm dist}(u_1, F_x), {\rm dist}(u_2, F_x)>\eta$. Then there exists $\Psi\colon\R^m\to \R^m$ bi-Lipschitz such that $\Psi(F_x)=F_x$, $\Psi(u_2)=u_1$ and $\|\nabla \Psi-I\|_{L^\infty}\leq \frac{C}{\eta}|u_1-u_2|$.
    Applying \eqref{eq: fund Lip est} with $\varv$ a minimizer for \eqref{eq: approx cap} for $u_2$ and then exchanging the roles of $u_1$ and $u_2$, we get
    \begin{equation}\label{eq: Lip z}
        |\varphi_\varepsilon(x, u_1)-\varphi_\varepsilon(x, u_2)|\leq C\bigg(\frac{L^{2p+2}}{\eta^{p+2}}+o_\varepsilon(1)\bigg)\,|u_1-u_2|.
    \end{equation}
    Given $x_1, x_2$, we can test \eqref{eq: fund Lip est} on a minimizer $\varv$ for problem \eqref{eq: approx cap} and $\Psi=\Phi_{x_1, x_2}$. In this case we get
    \begin{equation}
        \varphi_\varepsilon(x_2, \Phi_{x_1, x_2}(u))\leq \varphi_\varepsilon(x_1, u)+C(L^{p+2}+o_\varepsilon(1))|x_1-x_2|.
    \end{equation}
    If $x_1, x_2$ are close enough (so that for instance  $\|\Phi_{x_1, x_2}-id\|, \|\Phi_{x_2, x_1}-id\|\leq \frac{\eta}{4}$ on $B(0, L)$) then this inequality can be coupled with \eqref{eq: Lip z} and yield
    \begin{equation}
        |\varphi_\varepsilon(x_1, u_1)-\varphi_\varepsilon(x_2, u_2)|\leq (C(L, \eta)+o_\varepsilon(1))(|x_1-x_2|+|u_1-u_2|)
    \end{equation}
    for $u_1, u_2$ satisfying $|u_1|, |u_2|\leq L$, ${\rm dist}(u_1, F_{x_1}), {\rm dist}(u_2, F_{x_2})>\eta$.
    This equi-continuity result, coupled with the crude estimate
    \begin{equation}
        |\varphi_\varepsilon(x_1, u_1)-\varphi(x_1, u_1)|\leq C\eta^p+o_\varepsilon(1)
    \end{equation}
    holding for $(x_1, u_1)$ such that ${\rm dist}(u_1, F_{x_1})\leq \eta$, allows to 
    conclude by an Ascoli--Arzelà argument.
\end{proof}



{\bf References}
\bibliographystyle{elsarticle-num} 
\bibliography{references}

\end{document}